\newenvironment{proof}{\paragraph*{\small Proof.}}{\hfill$\square$\vspace{1em}}
\def\cA{{\mathcal A}}
\def\cC{{\mathcal C}}
\def\cL{{\mathcal L}}
\def\cM{{\mathcal M}}
\def\cQ{{\mathcal Q}}
\def\cX{{\mathcal X}}
\def\E{\mathbb{E}}
\def\N{\mathbb{N}}
\def\P{\mathbb{P}}
\def\R{\mathbb{R}}
\def\T{\mathbb{T}}
\def\Z{\mathbb{Z}}
\renewcommand{\d}{\mathrm{d}}
\newtheorem{Def}{Definition}[section]
\newtheorem{Thm}[Def]{Theorem}
\newtheorem{Cor}[Def]{Corollary}
\newtheorem{Rmk}[Def]{Remark}
\newtheorem{Lem}[Def]{Lemma}
\newtheorem{Prop}[Def]{Proposition}
\def\ig{\mathfrak{L}}
\providecommand{\keywords}[1]
{
  \small\noindent
  \textbf{Keywords:} #1
}
\providecommand{\classification}[1]
{
  \small\noindent	
  \textbf{Mathematics Subject Classification:} #1
}
\begin{document}
\title{Synchronization Games\thanks{
The authors would like to thank Professor
Marco Cirant for insightful comments.
This work was partially supported 
by the National Science Foundation grant
DMS-2406762. }}
\author{Felix H\"{o}fer and H.~Mete Soner
\thanks{Both authors are with Princeton University, Department of Operations Research and Financial Engineering (e-mails: \texttt{fhoefer@princeton.edu}, \texttt{soner@princeton.edu}).}}

\maketitle

\begin{abstract}
\noindent
We propose a new mean-field game model with two states to study synchronization phenomena, and we provide a comprehensive characterization of stationary and dynamic equilibria along with their stability properties. The game undergoes a phase transition with increasing interaction strength. In the subcritical regime, the uniform distribution, representing incoherence, is the unique and stable stationary equilibrium. Above the critical interaction threshold, the uniform equilibrium becomes unstable and there is a multiplicity of stationary equilibria that are self-organizing. Under a discounted cost, dynamic equilibria spiral around the uniform distribution before converging to the self-organizing equilibria. With an ergodic cost, however, unexpected periodic equilibria around the uniform distribution emerge.
\end{abstract}

\keywords{Kuramoto synchronization, mean-field games, consensus problems,  Markov processes, Nash equilibrium, dynamic programming.}
\vspace{1em}

\classification{34C25, 34H05, 37G35, 49L20, 91A16, 92B25}

\section{Introduction} \label{sec:introduction}

Building on Winfree's work in the 1960s, the Kuramoto model \cite{kuramoto_self-entrainment_1975} has become the corner stone of mathematical models of collective synchronization and has received attention in all natural sciences, engineering, and mathematics. The model consists of coupled oscillators that exhibit spontaneous synchronization once the coupling strength exceeds a critical threshold. While the classical model postulates the dynamics of each oscillator in the form of a system of nonlinear ordinary differential equations, we use the mean-field game (MFG) formalism. Indeed, instead of positing the dynamics of the particles, we let the individual particles determine their behavior endogenously by minimizing a cost functional and settling in a Nash equilibrium. More generally, MFGs were independently introduced  by Lasry \& Lions \cite{lasry_jeux_2006-1, lasry_jeux_2006,lasry_mean_2007} and Huang, Caines, \& Malhamé \cite{huang_invariance_2007, huang_large-population_2007, huang_nash_2007,huang_individual_2003} to approximate large population games in which the interaction appears through the empirical distribution of all agents. 

The MFG approach to synchronization was first proposed by Yin, Mehta, Meyn, \& Shanbhag in \cite{yin_synchronization_2010,YMMS}, and later used by Carmona \& Graves \cite{carmona_jet_2020} to study jet-lag recovery by modeling the alignment with the circadian rhythm. These studies and the recent work of Carmona, Cormier, \& Soner \cite{carmona_synchronization_2023} establish phase transitions in the Kuramoto MFGs analogous to the one exhibited by the original Kuramoto dynamical system, providing strong evidence for a connection between these two seemingly very different models.

In the mean-field game, each oscillator is treated as a rational agent that minimizes the distance to other oscillators while incurring a quadratic cost. Notably, this game is not Lasry-Lions monotone, 
which precludes us from guaranteeing the uniqueness of Nash equilibria. Indeed, 
multiple solutions bifurcate from the incoherent state 
as the coupling strength, or the misalignment cost, increases.
This is the central parameter 
in these models causing the phase transition which can be summarized as follows:
\begin{itemize}
    \item \textit{Subcritical regime}: Below the critical coupling strength, oscillators behave incoherently and the uniform distribution is the unique stationary Nash equilibrium. Additionally, \cite{carmona_synchronization_2023} obtains a local stability result by proving that from any initial condition that is sufficiently close to the uniform distribution there exist time-inhomogeneous Nash equilibria converging to the uniform measure as time passes.
    
    \item \textit{Supercritical regime}: Above the critical interaction parameter, multiple non-uniform stationary Nash equilibria emerge, leading to partial self-organization.  As the interaction strength goes to infinity, these stationary Nash equilibria become more coherent and  converge to a Dirac measure representing full synchronization.  
\end{itemize}

\begin{wrapfigure}[16]{r}{0.35\textwidth}
    \centering
    \vspace{-1em}
    \includegraphics[width=.8\linewidth]{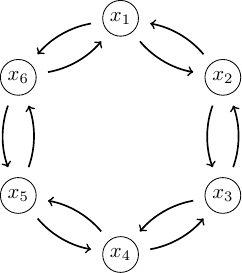}
    \caption{Example of a six-state model.}
    \label{fig:N-state-Kuramoto}
\end{wrapfigure}

Recently, Cesaroni \& Cirant \cite{cesaroni_stationary_2023} obtained further results with an ergodic cost functional in the \emph{large} parameter regime. They prove that for large misalignment parameters, there is only one non-uniform, self-organizing, stationary Nash equilibrium modulo translations. Furthermore, \cite{cesaroni_stationary_2023} shows that there are Nash equilibria of the finite horizon problem that converge to the self-organizing equilibrium.

In the Kuramoto model, the one-dimensional torus given by $\T=\R/(2\pi\Z)$ represents the set of possible phases of the oscillators.  We simplify this by considering an equidistant discretization $\cX_N=\{x_1,..,x_N\}\subset\T$ of the phase space, as done more generally by Bertucci \& Cecchin \cite{BC}.  We assume that the oscillators are subject to non-zero \textit{thermal noise} of strength $\sigma^2>0$ and without any control, they would move to their left or right on $\cX_N$ with a rate of $N^2\sigma^2$. In the game setting, oscillators are allowed to influence this transition by choosing a feedback control $\alpha := (\alpha_{\ell},\alpha_r): [0,\infty)\times\cX_N\mapsto[0,\infty)^2$. Then, the position of a generic oscillator is described by a continuous-time Markov chain $X^{\alpha}_t$ with controlled transition rates $\lambda^\alpha_{i,j}(t)$ from state $x_i$ to $x_j$ given by,
\begin{equation*}
    \lambda^{\alpha}_{i,j}(t) = 
    \begin{cases}
        N\alpha_{\ell}(t,x_i)  + N^2\sigma^2 & \text{if } j=i-1 \,\, (\text{mod}\,N),\\
        N\alpha_{r}(t,x_i) + N^2\sigma^2 & \text{if } j=i+1 \,\, (\text{mod}\,N),\\
        0 & \text{else.}
    \end{cases}
\end{equation*}
Figure \ref{fig:N-state-Kuramoto} is a visualization of a six-state model. 

We now describe the \emph{discretized Kuramoto MFG} which is a game between infinitely many such oscillators while the number of possible phases $N$ is kept finite. Let the probability flow $(\mu_t)_{t\geq0}$ on $\cX_N$  represent the population distribution of oscillators' phases. Then, $\mu_t$ solves a forward Kolmogorov equation which the representative oscillator or agent cannot influence. Instead, they try to align their own phase with $\mu_t$ while incurring a quadratic cost of effort by minimizing the corresponding discounted infinite cost,
\begin{equation}
\label{eq:J_beta}
  J_\beta(\alpha):=  \E \int_0^{\infty} e^{-\beta t} \left(\frac{1}{2}|\alpha(t,X^{\alpha}_t)|^2 + \kappa\, \ell(X^{\alpha}_t,\mu_t)\right)\,\d t,
\end{equation}
over feedback controls $\alpha$, where $X^\alpha_t\in\cX_N$ is the random position of the representative oscillator with controlled rates $(\lambda^{\alpha}_{i,j})$ and $X^{\alpha}_0\sim\mu_0$, the discount factor is $\beta>0$, the running cost $\ell$ is defined by
\begin{equation} \label{eq:Kuramoto_cost}
    \ell(x,\mu) := 2 \sum_{y \in \cX_N} \sin^2\left(\frac{x-y}{2}\right)\,\mu(\{y\}),
\end{equation}
and $\kappa>0$ is the strength of the interactions between oscillators. The second part of the running cost
encourages alignment with the population phase and, hence, is not Lasry-Lions monotone. We emphasize again that 
$\mu_t$ is fixed for the representative agent, and any optimal control $\alpha^\ast$ and its distribution 
$\cL(X^{\alpha^\ast}_t)$ are  functions of $\mu_t$. Hence, this construction defines a map 
$(\mu_t)_{t\geq0}\mapsto(\cL(X^{\alpha^\ast}_t))_{t\geq0}$, and
the fixed points of this map are the \textit{mean-field game (Nash) equilibria}. If a Nash equilibrium $t\mapsto\mu^*_t$ 
is constant, we call it a \emph{stationary MFG (Nash) equilibrium}. 
We note that in this case, the initial condition 
$\mu_0^*$ is no longer given but part of the solution. We say that a stationary Nash equilibrium 
$\mu^*$ is \textit{stable} if all time-inhomogeneous Nash equilibria 
$(\mu^*_t)_{t\geq0}$ converge towards $\mu^*$ as $t\to\infty$. 

\subsection{Related Studies}

This paper studies the emergence and properties of self-organizing equilibria as the outcome of a game among a continuum of rational agents that favor alignment with the majority.  As such it is situated at the intersection of several fields.  In addition to several already mentioned studies on Kuramoto MFG and synchronization, our model is related to the studies of opinion dynamics as studied in \cite{acemoglu_opinion_2013, bauso_opinion_2016,castellano_statistical_2009} and the references therein.  The static mean-field game \emph{``Where do I put my towel on the beach?''} that Lions discusses in \cite{lions} is also similar to our model. In such a setting, there is non-uniqueness once agents favor crowds as opposed to avoiding them.  

Our model bears resemblance to \emph{coordination games} in game theory where non-cooperative agents wish to independently play the same strategies and in which, as a result, multiple equilibria emerge. It is also connected to the study of opinion dynamics and consensus formation, see Dai Pra, Sartori, \& Tolotti \cite{dai_pra_climb_2019} who study the emergence of collective behavior and investigate a dynamic model that is similar to ours in the special case of $\sigma=0$. The absence of thermal noise, however, precludes the emergence of the phase transition and random structures. 

Furthermore, our work contributes to a growing literature of finite-state mean-field games \cite[Chapter 7.2]{carmona_probabilistic_2018-1}.  
These include \cite{kolokoltsov_mean-field-game_2016,kolokoltsov_corruption_2018,kolokoltsov_mean-field-game_2017} 
who study the spread of corruption and a botnet defense model. The first two papers analyze a game with three and four states in a bang-bang type model that exhibits a multiplicity of stationary equilibria. Socio-economic applications of finite-state MFGs, including a description of the potential game structure, can be found in \cite{gomes_socio-economic_2014}. Classical results establishing existence, uniqueness under the Lasry-Lions monotonicity condition, and the convergence of the finite player games in finite-state MFGs can for example be found in \cite{carmona_probabilistic_2021,cecchin_probabilistic_2020,gueant_existence_2015}. Cohen \& Zell \cite{cohen_analysis_2023} treat the infinite horizon case, and \cite{cecchin_convergence_2019} studies the convergence problem of a two-state MFG under an anti-monotonous cost. 
In recent work, \cite{cohen2024asymptotic} prove that any of the time-dependent ergodic Nash equilibria presented in our paper give rise to a $(C/\sqrt{n})$-Nash equilibrium in the $n$-player game, see their Remark 3.4.

The paper is organized as follows:
We formulate the problem and define
the discounted and ergodic Nash equilibria in 
Section~\ref{sec:2MFG}, and we summarize the 
results in Section~\ref{sec:Results}.
We characterize the Nash equilibria as 
bounded solutions of a coupled system 
of ordinary differential equations in
Section~\ref{sec:characterization}
and discuss the stationary ones in 
Section~\ref{sec:Stationary_Equilibria}. 
The analysis of the discounted problem is carried
out in Section~\ref{sec:discounted}
and the ergodic one in Section~\ref{sec:ergodic}.
Extensions and conclusions are given in 
Section~\ref{sec:eg}, and 
we prove two technical results in the Appendix.

\section{Two-State Mean-Field Game} 
\label{sec:2MFG}

In what follows, we study the two-state game. We replace the state space $\cX_2$ 
with the discrete set $\mathcal{X}:=\{0,1\}$, and a probability measure $\mu$ on $\cX$ is identified with its value $p=\mu(\{1\})$. For $x\in\cX$, $x+1$ denotes summation mod $2$.  
In our model agents are incentivized to move
themselves towards the majority by minimizing 
the \emph{running cost} $\ell$ given by,
\begin{equation}\label{eq:cost}
\ell(x,p) := 
    \begin{cases}
        p & \text{if } x = 0,\\
        1-p & \text{if } x = 1,
    \end{cases}
    \qquad x \in \cX, \ p \in [0,1].
\end{equation}
Whenever the majority of the population is in state $1$, or equivalently when $p>1/2$, 
the above  running cost satisfies $\ell(0,p)>\ell(1,p)$,  encouraging the agents to align themselves with the majority 
of the players who are in state $1$. 
 In the opposite case when $p<1/2$,
$\ell(1,p) >\ell(0,p)$ making the majority state $0$
favorable.
The important  \emph{coupling constant} $\kappa$
that multiplies $\ell$ in \eqref{eq:J_beta}
is the \emph{misalignment strength}.
We further observe that, up to scaling, this is exactly the 
running cost introduced earlier in the $N$-state game, and it is not Lasry-Lions monotone.

\subsection{State Process}
\label{ssec:state}

\begin{Def}
\label{def:admissible}
{\rm{The set of}} feedback controls {\rm{$\cA$ consists of functions 
$\alpha:[0,\infty)\times\cX\mapsto[0,\infty)$ such that $\alpha(\cdot,x)$ is locally integrable.
We call a feedback control $\alpha \in \cA$}} special 
{\rm{if $\alpha(t,0)\, \alpha(t,1)=0$ holds for all times $t\geq0$.
$\cA_s \subset \cA$ denotes the  collection of all special
feedback controls.}}
\end{Def}
For $\alpha \in \cA$, we let $\sigma^2+ \alpha(t,0)$ be the transition rate from state $0$ to $1$ and 
$\sigma^2+ \alpha(t,1)$ is the rate from $1$ to $0$.
Here, $\sigma^2>0$ is a fixed \emph{thermal noise}. 
Following the interpretation discussed in the Introduction, 
for any $\alpha \in \cA$ and any initial distribution there exists a Markov chain $X^{\alpha}_t$ 
with the transition-rate matrix
\begin{equation*}
    Q^{\alpha}_t= 
    \begin{pmatrix}
        -\sigma^2 - \alpha(t,0) & \sigma^2 + \alpha(t,0)\\
        \sigma^2 + \alpha(t,1) & -\sigma^2 - \alpha(t,1)
    \end{pmatrix},
\end{equation*} 
where $\sigma^2>0$ is the strength of the thermal noise. 
Then, $p(t) = \P(X^\alpha_t=1)$ solves the Kolmogorov equation,
\begin{equation}
\label{eq:KOL}
\Dot p(t) = (\sigma^2+\alpha(t,0))(1-p(t))- (\sigma^2+\alpha(t,1))p(t).
\end{equation}
The following straightforward result proved in Appendix \ref{app:equal}
shows that there is no loss of generality in restricting the feedback controls 
to be special.

\begin{Lem}
\label{lem:equal}
For any $\alpha \in \cA$ there exists $\tilde \alpha \in \cA_s$
such that for every $x\in \cX$ and $ t \ge 0$,
$\P(X^{\tilde \alpha}_t=1) =\P(X^{\alpha}_t=1)$ and
$|\tilde \alpha(t,x)| \le  | \alpha(t,x)|$. Moreover,
for any probability flow $p$
there exists a unique $\alpha_p \in \cA_s$ such that
$p$ solves the Kolmogorov equation \eqref{eq:KOL}
with $\alpha_p$.
\end{Lem}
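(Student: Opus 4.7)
The crux of the lemma is that on the two-state space the Kolmogorov equation \eqref{eq:KOL} reduces to a single scalar ODE whose right-hand side depends on the pair $(\alpha(t,0),\alpha(t,1))$ only through the scalar combination
\[
\nu(t):=\alpha(t,0)\bigl(1-p(t)\bigr)-\alpha(t,1)\,p(t).
\]
Consequently, any two non-negative pairs producing the same $\nu(t)$ generate identical trajectories $p(t)$ and hence identical marginal laws. The plan is to project any admissible control onto the unique non-negative pair that also satisfies the complementarity relation $\tilde\alpha(t,0)\tilde\alpha(t,1)=0$.

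For the first claim, given $\alpha\in\cA$, I first solve \eqref{eq:KOL} to obtain the trajectory $p(t)$ and the induced $\nu(t)$, and then define
\[
\tilde\alpha(t,0):=\frac{\nu(t)^{+}}{1-p(t)}, \qquad \tilde\alpha(t,1):=\frac{\nu(t)^{-}}{p(t)},
\]
with the convention that the ratio equals $0$ when the denominator vanishes. The disjoint supports of $\nu^{+}$ and $\nu^{-}$ give $\tilde\alpha\in\cA_s$, and local integrability is inherited from the continuity of $p$ and local integrability of $\alpha$. On the set $\{\nu(t)\ge 0\}$ a one-line algebraic manipulation gives $\tilde\alpha(t,0)=\alpha(t,0)-\alpha(t,1)\,p(t)/(1-p(t))\le\alpha(t,0)$ and $\tilde\alpha(t,1)=0\le\alpha(t,1)$; the symmetric argument handles $\{\nu(t)<0\}$. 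Plugging $\tilde\alpha$ back into \eqref{eq:KOL} reproduces the same $\nu$, hence the same $\dot p$, so $p$ is preserved and $\P(X^{\tilde\alpha}_t=1)=\P(X^{\alpha}_t=1)$ for every $t\ge 0$.

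For the second claim, given a probability flow $p$, set $\nu(t):=\dot p(t)-\sigma^{2}(1-2p(t))$ and define $\alpha_p$ by the same formulas; existence is then a direct verification. Uniqueness in $\cA_s$ reduces to the observation that whenever $p(t)\in(0,1)$, the system
\[
\beta(t,0)\bigl(1-p(t)\bigr)-\beta(t,1)\,p(t)=\nu(t),\qquad \beta(t,0)\,\beta(t,1)=0,\qquad \beta\ge 0,
\]
admits exactly one solution, namely $\alpha_p(t,\cdot)$. The only subtlety throughout is the boundary set $\{p(t)\in\{0,1\}\}$, where the defining ratios are indeterminate; however, the positivity of the thermal noise $\sigma^{2}$ forces $p(t)\in(0,1)$ for a.e.\ $t>0$ along any admissible trajectory, so this set is Lebesgue null and poses no real obstacle. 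I expect this boundary bookkeeping, rather than any genuinely hard estimate, to be the main thing requiring care in writing out the details.
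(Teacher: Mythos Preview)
Your proposal is correct and follows essentially the same route as the paper: the paper also defines $\tilde\alpha$ by subtracting the smaller rate (rescaled by the ratio $p/(1-p)$ or its reciprocal) from the larger one according to the sign of $\alpha(t,0)(1-p(t))-\alpha(t,1)p(t)$, and derives the identical explicit formula for $\alpha_p$. Your packaging via the scalar $\nu(t)$ is slightly cleaner than the paper's use of the ratio $R(t)=\alpha(t,0)/\alpha(t,1)-p(t)/(1-p(t))$, and your remark on the boundary set $\{p\in\{0,1\}\}$ is a point the paper leaves implicit.
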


For any $\alpha\in \cA_s$,  there is a unique  measurable function of time 
defined by $a_\alpha(t):= \alpha(t,0)-\alpha(t,1)$.
Conversely for any measurable function $a :[0,\infty) \mapsto \R$, there exists
a canonical special feedback control $\alpha^a \in \cA_s$
given by $\alpha^a(t,0):=a^+(t)$,
$\alpha^a(t,1):=a^-(t)$,  where for
a real number $z$, $z^+=\max\{z,0\}$ is its positive part of $z$,
and $z^-=\max\{-z,0\}$ is its negative part. 
It is clear that $a(t)=a_{\alpha^a}(t)=\alpha^a(t,0)-\alpha^a(t,1)$,
and these maps provide a one-to-one
connection between special feedback controls
and measurable functions of time. Further, for a function $a=a(t)$, we write $X^a=X^{\alpha^a} $ for the Markov chain controlled by $\alpha^a$ and with an abuse of terminology, call $a$ a feedback control
without referring to $\alpha^a$ explicitly.

\subsection{Discounted Nash Equilibria}
\label{ssec:dNE}
Recall the cost functional $J_\beta$ of \eqref{eq:J_beta} with discount factor $\beta>0$ and specialized to the two-state setting. In particular, the running cost $\ell$ is now given by \eqref{eq:cost}.

\begin{Def}
\label{def.mfgbeta}
{\rm{We say that a flow of probabilities $t\mapsto p(t) \in [0,1]$ is a \emph{discounted mean-field game Nash equilibrium} starting from $p(0)$, if there exists $\alpha^*\in\mathcal{A}$ such that: }}
\begin{enumerate}
    \item {\rm{$\alpha^*$ minimizes the discounted cost functional $J_\beta(\alpha)$ over $\alpha\in\cA$, in which $X^{\alpha}_t\in\cX$ is the Markov chain 
    with $\P(X^{\alpha}_0=1)=p(0)$ and
    transition matrix $(Q^{\alpha}_t)_{t\geq0}$.}}
    \item {\rm{For all times $t\geq0$, we have $p(t)=\P(X^{\alpha^*}_t=1)$.}}
\end{enumerate}
\end{Def}

\begin{Rmk}
\label{rem:why}
{\rm{In view Lemma \ref{lem:equal},
for any $\alpha\in\cA$ there is $\tilde \alpha \in \cA_s$
such that $J_\beta(\tilde \alpha) \le J_\beta(\alpha)$.
Thus, we may assume that $\alpha^* \in \cA_s$ in the above 
definition.  Again by Lemma \ref{lem:equal},
as $p$ solves the Kolmogorov
equation \eqref{eq:KOL} with $\alpha^*$, we conclude
that $\alpha^*=\alpha_p$
Therefore, in equilibrium $p$ completely determines
the optimal strategy, and this is the main reason why we 
define Nash equilibria only through $p$.}}
\end{Rmk}

\subsection{Ergodic Nash Equilibria}
\label{ssec:eNE}

For a given flow of probabilities $t\mapsto p(t) \in [0,1]$ we define the \emph{ergodic cost functional} $J_0$ by,
$$
J_0(\alpha):= \underset{T\uparrow\infty}{\overline{\lim}}\ \frac1T \
\E\int_0^T\left(\frac12 \alpha(t,X^\alpha_t)^2 + \kappa \ell(X^{\alpha}_t,p(t))\right)\,\d t,\quad \alpha\in\cA,
$$
where as before $X^\alpha_t$ denotes the Markov chain controlled by
the feedback control $\alpha$ and $\P(X^\alpha_0=1)=p(0)$. 
Clearly, $J_0$ is invariant under changes of the control 
and the probabilities on bounded time intervals, and only the tail behavior matters. 
Thus,  any meaningful definition of Nash equilibria (NE)
must restrict either the controls or the probabilities.

For instance, as a first attempt, suppose that we define ergodic NE by simply 
replacing $J_\beta$ with $J_0$ in the previous definition,
and  assume that $p(t)\equiv p(0)$ is a stationary NE with optimal control 
$\alpha^*$. By modifying the probability flow  and the control on a finite time
interval, we could construct infinitely many other 
probabilities $\tilde p$ with controls
$\tilde a^*$ such that $\tilde a^*$  minimizes
$J_0$ with the given $\tilde p$, and $\tilde p(t)= \P(X^{\tilde a^*}_t = 1)$.
Consequently, all of these constructions would be considered as 
distinct NE under this definition,
despite sharing the same tail behavior.
To avoid this redundancy, we restrict NE to
be periodic or stationary, thereby implicitly consider them
as the natural representative of the other equilibria with the same
tail behavior.  In particular,
this restriction identifies all the flows $\tilde p$ that we constructed above
with the original stationary one.
\begin{Def}
{\rm{We say that a \emph{periodic} or a
\emph{stationary} flow of probabilities 
$t\mapsto p(t) \in [0,1]$ is an \emph{ergodic mean-field game Nash equilibrium} if it 
there exists a control  $\alpha^*\in\mathcal{A}$ such that
$\alpha^*$ minimizes the ergodic cost functional $J_0$ and 
$p(t)=\P(X^{\alpha^*}_t=1)$ for all times $t \ge 0$.}}
\end{Def}

The periodicity restriction in the above definition allows us to obtain 
the desirable characterization of NE as solutions of the MFG system
(\ref{eq:Kol},\ref{eq:HJ_control}). Moreover, it does not cause any loss 
of generality in our model as it is well-known that 
the limit behavior of such planar systems is always periodic.

\begin{Rmk} 
\label{rem:why_erg} {\rm{In view of the discussion given above, the optimal control is not unique in the general class of feedback controls $\cA$.
However, by Lemma \ref{lem:equal} once there is a minimizer $\alpha^*$, 
we may assume that $\alpha^*$ is a special feedback control.  
In fact, if we assume that the minimizer in the above definition is in $\cA_s$, 
then it must be equal to $\alpha^*=\alpha_p$.
In particular, $\alpha_p$ is also periodic with the same period as $p$.}} 
\end{Rmk}

\section{Results}
\label{sec:Results}

We study the two-state model, providing a complete characterization of the set of  time-inhomogeneous 
and stationary equilibria and their stability properties for all values of the parameters $(\beta,\sigma^2,\kappa)$. In addition to the discounted cost functional, we also investigate the model with an ergodic cost which formally corresponds to the limit $\beta\downarrow0$, and we further identify the equilibria that are obtained as first-order conditions of the associated mean-field control problem, see Sections \ref{ss:potential} and \ref{ss:ergodic_potential}.

By standard techniques from optimal control we show in subsection \ref{ss:cov} that given a flow of probabilities $p(\cdot)$ the optimal feedback control $\alpha$ is unique for the discounted problem and is given by,
$$
\alpha(t,0)= (v(t,0)- v(t,1))^+, \qquad
\alpha(t,1)= (v(t,0)- v(t,1))^-,
$$
where $v$ is the value function defined in \eqref{eq:value}. 
In view of subsection \ref{ssec:state},  the above
optimal control $\alpha=\alpha^a$ with $a(t):= v(t,0)-v(t,1)$. If $p(t)$ is a discounted Nash equilibrium, then $p(t) = \P(X^a_t=1)$ 
and it solves forward Kolmogorov equation,
\begin{equation}
    \label{eq:Kol}
    \Dot p(t) = (\sigma^2+a^+(t))(1-p(t))- (\sigma^2+a^-(t))p(t).
\end{equation}
Additionally, using the dynamic programming equation satisfied
by the value $v$, we show that $a$ solves,
\begin{equation}
\Dot a(t) = (\beta + 2\sigma^2) a(t) + \frac{1}{2}\mathrm{sign}(a(t))a(t)^2 - \kappa (2p(t)-1) \label{eq:HJ_control},
\end{equation}
where $\beta=0$ corresponds to the ergodic cost. 

Combining these, we characterize all discounted and ergodic Nash equilibria as bounded solutions of (\ref{eq:Kol},\ref{eq:HJ_control}). This one-to-one connection between the Nash equilibria and (\ref{eq:Kol},\ref{eq:HJ_control}) is established in Propositions \ref{pr:ergodic} and \ref{pr:infinite}.
We continue by summarizing its consequences, while precise statements and their proofs are provided in the following sections.

\subsection{Discounted Cost}

\begin{wrapfigure}[22]{r}{0.5\textwidth}
    \vspace{-1em}
    \centering
    \includegraphics[width=.73\linewidth]{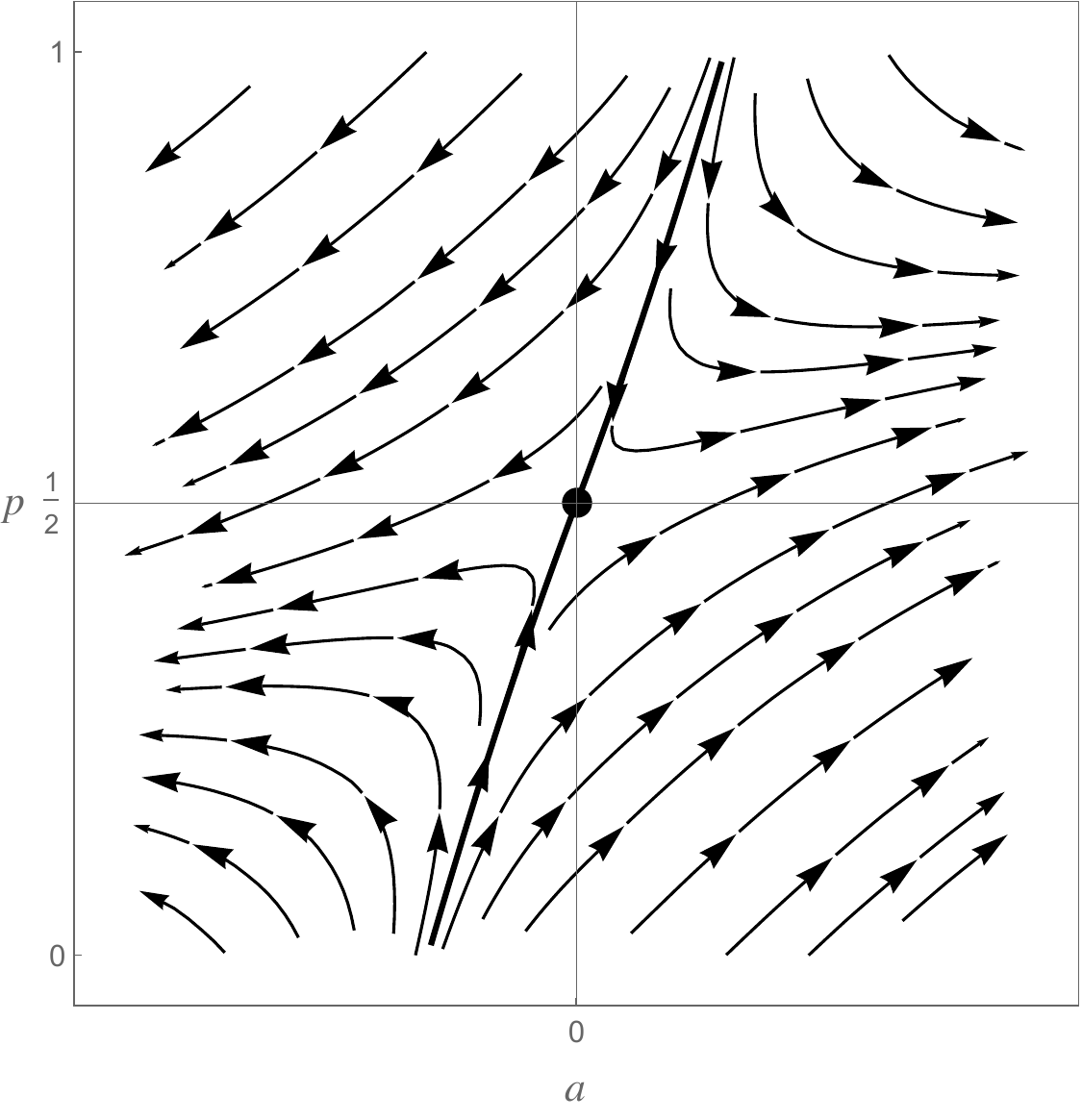}
    \caption{Phase diagram of the discounted subcritical system (\ref{eq:Kol},\ref{eq:HJ_control}). The dot shows the location of the stationary Nash equilibrium while the thick lines illustrate time-inhomogeneous Nash equilibria.}
    \label{fig:Phase_Diag_disc_sub}
\end{wrapfigure}

For a positive discount factor $\beta>0$ the critical coupling or misalignment  strength is
\begin{equation*}
    \kappa_c = 2\beta \sigma^2 + 4\sigma^4.
\end{equation*}

\noindent
\textit{Subcritical regime $\kappa<\kappa_c$}. The uniform distribution is the unique stationary Nash equilibrium (SNE) and it is stable.  Further, for any initial distribution there exists a unique time-inhomogeneous Nash equilibrium (NE), and it converges to the uniform distribution.

\noindent
\textit{Supercritical regime $\kappa>\kappa_c$}. There exist three SNE: the uniform distribution and two symmetric self-organizing SNE given by $\underline{p}<1/2<\overline{p}$ where $\underline{p}=1-\overline{p}$. The local behavior around the uniform distribution depends on two sub-regimes:
\begin{enumerate}[(A)]
    \item $\kappa_c<\kappa<\kappa_c+\beta^2/4$. For any non-uniform initial distribution  there exists a unique NE and it converges to one of the self-organizing SNE. 
    
    \item $\kappa>\kappa_c+\beta^2/4$. For initial conditions close to the uniform distribution there exist many NE that spiral around the uniform distribution before converging to one of the self-organizing SNE.
\end{enumerate}

We visualize these results by the phase diagrams of the ordinary differential equations (\ref{eq:Kol},\ref{eq:HJ_control}). 
The uniform SNE with zero control corresponds to the fixed point $(a,p)=(0,1/2)$ of this system of equations.
The local behavior of the solutions around this point changes with increasing interaction and can be summarized by the eigenvalues $\lambda_1,\lambda_2$ of the linearized system around $(0,1/2)$.

In the subcritical regime, the point $(0,1/2)$ is a saddle, i.e.\ $\lambda_1<0<\lambda_2$, and its stable manifold crosses the boundary $\{p=0,1\}$ as seen in Figure \ref{fig:Phase_Diag_disc_sub}.

When $\kappa>\kappa_c$, the two self-organizing SNE are saddles and their stable manifolds join at the origin, creating a curve $\cC$ that connects all SNE and hits the boundaries $\{p=0,1\}$. In the weakly supercritical regime (A), the curve $\cC$ is monotone and the uniform SNE becomes repellent with both eigenvalues being positive, as shown in Figure \ref{fig:Phase_Diag_disc_supA}. In the supercritical regime (B), $\lambda_1, \lambda_2$ become complex with positive real parts, so that $(0,1/2)$ is a spiral source, see Figure \ref{fig:Phase_Diag_disc_supB}. In all phase diagram, the thick dots show the location of stationary equilibria.

\begin{figure}[h]
    \begin{subfigure}{0.45\textwidth}
        \centering
        \includegraphics[width=.79\linewidth]{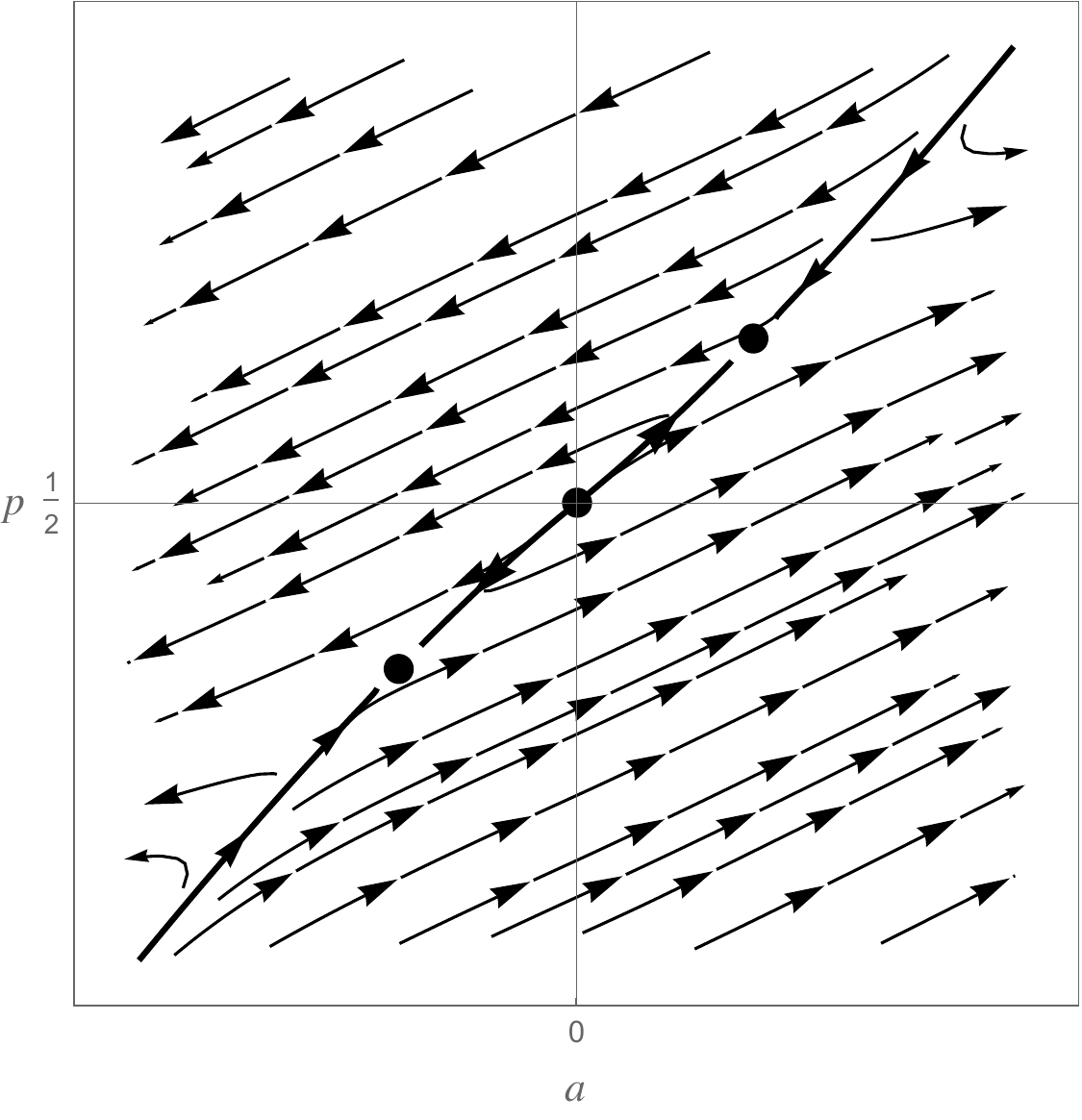}
        \caption{Supercritical case (A): $\kappa_c<\kappa<\kappa_c+\beta^2/4$. The thick line corresponds to the monotone curve $\cC$, and the dots show the stationary equilibria.}
        \label{fig:Phase_Diag_disc_supA}
    \end{subfigure}\hspace{5pt}
    \begin{subfigure}{0.45\textwidth}
        \centering
        \includegraphics[width=.79\linewidth]{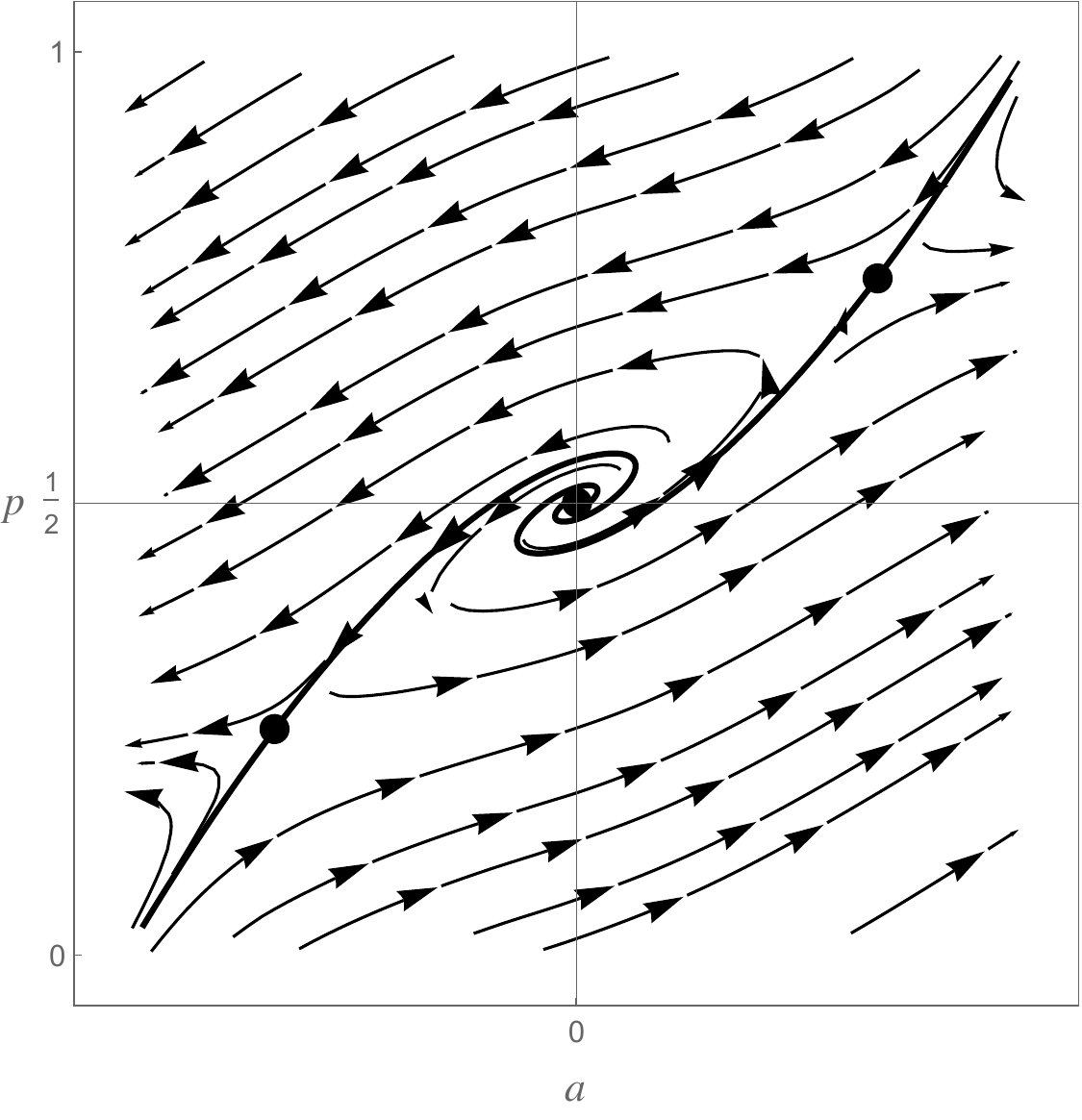}
        \caption{Supercritical case (B): $\kappa>\kappa_c+\beta^2/4$. The thick line again corresponds to the curve $\cC$, which spirals around $(0,1/2)$.}
        \label{fig:Phase_Diag_disc_supB}
    \end{subfigure}
    \caption{Phase diagrams of the supercritical discounted system (\ref{eq:Kol},\ref{eq:HJ_control})}
\end{figure}

All Kuramoto MFGs are potential and there is an associated mean-field control (MFC) problem introduced in subsections \ref{ss:potential}, \ref{ss:ergodic_potential} below. The minimizers of these problems are Nash equilibria for the MFG. Surprisingly, with a discounted cost, the uniform distribution is selected as the minimizer  in parts of the supercritical regime.

\subsection{Ergodic Cost}

\begin{wrapfigure}{r}{0.5\textwidth}
    \vspace{-2em}
    \centering
    \includegraphics[width=.73\linewidth]{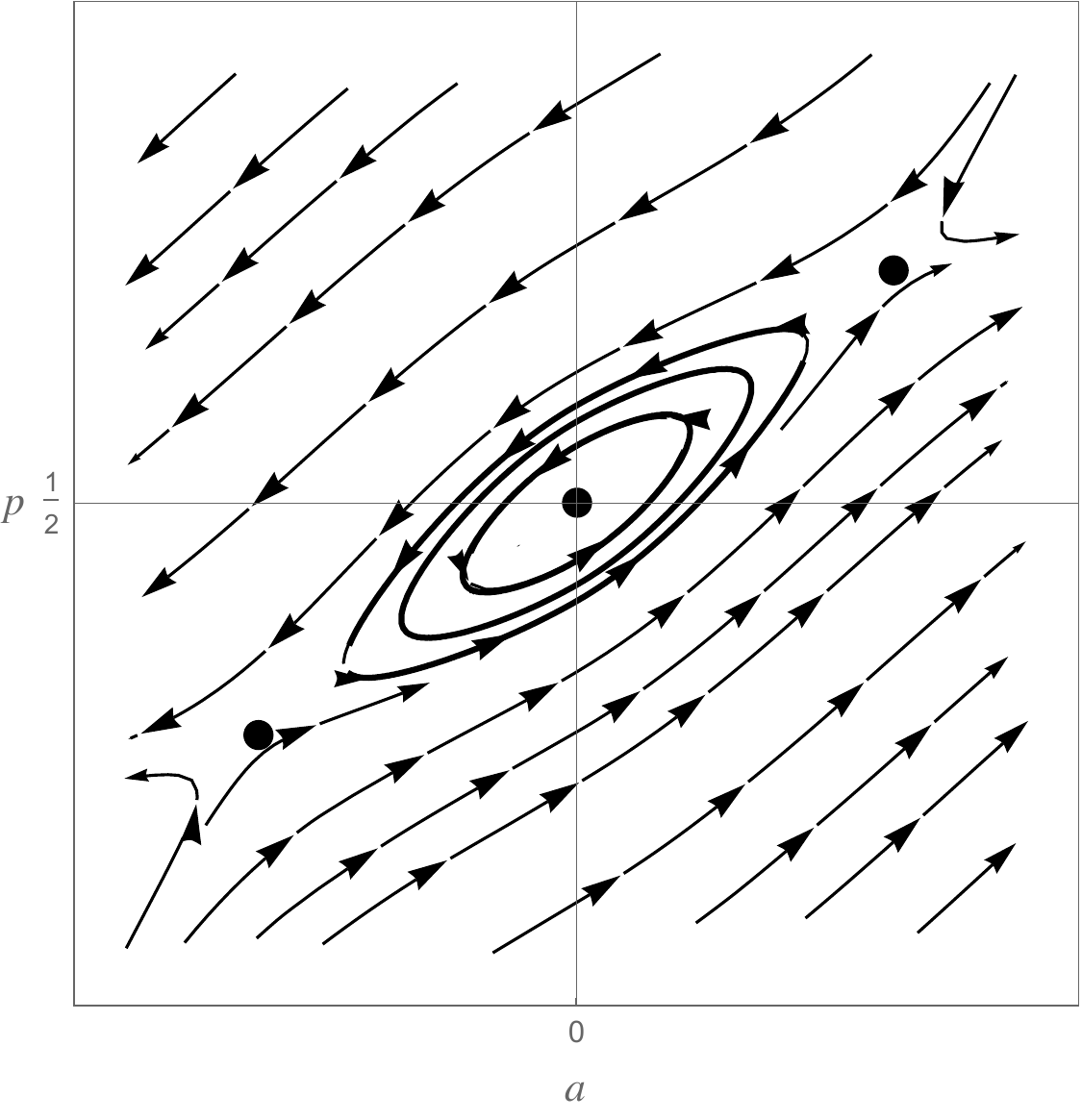}
    \caption{Phase diagram of the supercritical ergodic system (\ref{eq:Kol},\ref{eq:HJ_control}).}
    \label{fig:Phase_Diag_erg}
\end{wrapfigure}

Here the critical interaction parameter is
\begin{equation*}
    \kappa_c=4\sigma^4.
\end{equation*}

In the subcritical regime, the uniform distribution $p^*=1/2$ is again the unique SNE, and there are no other ergodic NE. 

In the supercritical case $\kappa>\kappa_c$, in addition to the uniform distribution, there are two other symmetric SNE. However, in contrast to the discounted model, there are infinitely many periodic NE rotating around the uniform distribution as well. Figure \ref{fig:Phase_Diag_erg} shows the phase diagram of the system (\ref{eq:Kol},\ref{eq:HJ_control}) close to the uniform distribution, and in dynamical systems terminology, the origin is a center of the equations (\ref{eq:Kol},\ref{eq:HJ_control}).

In the subcritical regime, the uniform distribution is the minimizer of the associated ergodic mean-field control (MFC) problem. It fails to be the minimizer in the supercritical case.

\section{Characterization of Nash Equilibria} 
\label{sec:characterization}

As described in the previous sections, Nash equilibria are in a natural one-to-one correspondence with a system of forward-backward ordinary differential equations which we refer to as the \emph{MFG system}. To state this characterization, for $x \in \cX$, $v \in \R$, and $p \in [0,1]$, we define a \emph{Hamiltonian} by
\begin{equation}
    H(x,a,p) := \inf_{z\geq0} \left\{ (\sigma^2+z)a + \frac{1}{2} z^2 \right\} + \kappa\,\ell(x,p) 
    = \sigma^2a -\frac12 (a^-)^2 + \kappa \ell(x,p). 
    \label{eq:Hamiltonian}
\end{equation}

\subsection{Ergodic MFG System}
The ergodic dynamic programming equation is
\begin{equation}
    \label{eq:HJ_erg}
    - \partial_t v(t,x) + \overline{\lambda} = H(x,v(t,x+1)-v(t,x),p(t)),
\end{equation}
for every $x \in \cX, t \ge  0$,  and an optimal control is given by
\begin{equation}
    a(t)=  v(t,0)-v(t,1).
\label{eq:opt_feedback}
\end{equation}
The unknowns in \eqref{eq:HJ_erg} are the constant $\overline{\lambda}$ and the function $v(t,x)$. Classically, given a flow $p(\cdot)$, such a solution pair $( \overline{\lambda} ,v)$  is constructed by letting $\beta$ to zero in the corresponding discounted control problem. We detail this argument in Appendix \ref{app:erg} since the time-dependence in the ergodic dynamic programming equation \eqref{eq:HJ_erg} is non-standard.

When the flow $p(\cdot)$ is a Nash equilibrium, then $p(\cdot)$ solves \eqref{eq:Kol} with the feedback control defined above, and the ergodic mean-field game system $(\mathrm{MFG}_0)$ consists of the equations \eqref{eq:HJ_erg}, \eqref{eq:Kol} coupled by \eqref{eq:opt_feedback}. Precisely, we call a triplet $(\overline{\lambda},v,p)$ a \textit{classical solution} of $(\mathrm{MFG}_0)$ if $\overline{\lambda}\in\R$ is a real number, and $v:[0,\infty)\times\mathcal{X}\mapsto\R$, $p:[0,\infty)\mapsto[0,1]$ are continuously differentiable in the time variable, and they satisfy \eqref{eq:HJ_erg} and \eqref{eq:Kol} with $a$ given by \eqref{eq:opt_feedback}. 

We proceed to establish the correspondence of NE with solutions of $(\mathrm{MFG}_0)$, and we also show that $\overline{\lambda}$ has an interpretation as the optimal value of the ergodic cost,
\begin{equation} 
    \overline{\lambda} = \inf_{\alpha\in\mathcal{A}} \ \ \underset{T\uparrow\infty} {\overline{\lim}}\ \ \frac{1}{T} \ \E\int_0^T\left(\frac{1}{2}\alpha(t,X^\alpha_t)^2 + \kappa \ell(X^{\alpha}_t,p(t))\right)\,\d t.
    \label{eq:erg_value}
\end{equation}
Let $\cA_c$ be the set of feedback controls in $\cA$ that are continuous in the time variable.

\begin{Prop}
\label{pr:ergodic}
For a continuous periodic flow of probabilities $p(\cdot)$, the following are equivalent:
\begin{enumerate}[(i)]
    \item 
    \label{i} $p(\cdot)$ is an ergodic Nash equilibrium.
    \item 
    \label{ii}
    There are a constant $\overline{\lambda}$ and $v:[0,\infty)\times \cX \mapsto \R$ such that $(\overline{\lambda},v,p)$ is a bounded classical solution  of $(\mathrm{MFG}_0)$.
    \item 
    \label{iii}
    There is a feedback control $a\in \cA_c$ such that $(a,p)$ is a periodic solution of (\ref{eq:Kol},\ref{eq:HJ_control}) with $\beta=0$.
\end{enumerate}
In all cases, $\overline{\lambda}$ is given by \eqref{eq:erg_value}.
\end{Prop}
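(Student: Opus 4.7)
The plan is to prove the two equivalences \eqref{ii}$\Leftrightarrow$\eqref{iii} (algebraic reduction of the HJB system) and \eqref{i}$\Leftrightarrow$\eqref{ii} (ergodic dynamic programming) separately; the identification of $\overline{\lambda}$ with the value in \eqref{eq:erg_value} will fall out of the verification step.

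For \eqref{ii}$\Rightarrow$\eqref{iii}, I would set $a(t):=v(t,0)-v(t,1)$ and use the identities $H(0,-a,p)=-\sigma^2 a-\tfrac12(a^+)^2+\kappa p$ and $H(1,a,p)=\sigma^2 a-\tfrac12(a^-)^2+\kappa(1-p)$ implied by \eqref{eq:Hamiltonian}: subtracting \eqref{eq:HJ_erg} at $x=1$ from the one at $x=0$ cancels $\overline{\lambda}$ and, upon using the identity $(a^+)^2-(a^-)^2=\mathrm{sign}(a)\,a^2$, produces \eqref{eq:HJ_control} with $\beta=0$. The Kolmogorov equation \eqref{eq:Kol} is built into the definition of classical solution via \eqref{eq:opt_feedback} and \eqref{eq:KOL}. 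Boundedness of $v$ forces $a$ bounded, and periodicity of $a$ with the period of $p$ then follows from a planar argument combining Poincar\'e--Bendixson with uniqueness of trajectories of the autonomous system (\ref{eq:Kol},\ref{eq:HJ_control}). For the converse \eqref{iii}$\Rightarrow$\eqref{ii}, I would recover $v(t,0)$ by integrating $\partial_t v(\cdot,0)$ from the HJB at $x=0$, set $v(t,1):=v(t,0)-a(t)$, and choose $\overline{\lambda}$ so that $v$ is periodic (equivalently bounded); the two consistency conditions arising from the HJB at $x=0,1$ are compatible because $\langle\Dot a\rangle=0$ for periodic $a$.

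The implication \eqref{ii}$\Rightarrow$\eqref{i} is a standard verification argument. Applying Dynkin's formula to $v(t,X^\alpha_t)$ for an arbitrary $\alpha\in\cA$ and combining it with the pointwise Hamiltonian inequality $\tfrac12\alpha^2+(\sigma^2+\alpha)b\geq \inf_{z\geq 0}\{(\sigma^2+z)b+\tfrac12 z^2\}$ with $b:=v(t,x+1)-v(t,x)$, together with \eqref{eq:HJ_erg}, yields after taking expectations, dividing by $T$, and using boundedness of $v$, that $\overline{\lambda}\leq J_0(\alpha)$ for every $\alpha\in\cA$, with equality at $\alpha^a$ (since $a$ is the pointwise minimizer in $H$). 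Hence $\alpha^a$ is optimal, $p(\cdot)=\P(X^{\alpha^a}_\cdot=1)$ by \eqref{eq:Kol}, so $p$ is an ergodic NE, and the same chain identifies $\overline{\lambda}$ with \eqref{eq:erg_value}.

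Finally, for \eqref{i}$\Rightarrow$\eqref{ii}, I would invoke the vanishing-discount construction sketched in Appendix~\ref{app:erg}: the discounted value functions $v_\beta$ associated with the fixed periodic flow $p(\cdot)$, renormalized as $\tilde v_\beta:=v_\beta-v_\beta(0,0)$, form a relatively compact family whose limit as $\beta\downarrow 0$ produces the pair $(\overline{\lambda},v)$ solving \eqref{eq:HJ_erg}. This is the main obstacle: because the data $p(t)$ is time-varying rather than stationary, the classical ergodic theory does not apply off the shelf, and the uniform estimates must be obtained using continuity and periodicity of $p$ together with boundedness of $\ell$ on $\cX\times[0,1]$, which is precisely why this step is relegated to the Appendix.
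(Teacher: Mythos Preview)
Your plan for \eqref{ii}$\Leftrightarrow$\eqref{iii} and \eqref{ii}$\Rightarrow$\eqref{i} matches the paper's argument closely and is correct. The genuine gap is in \eqref{i}$\Rightarrow$\eqref{ii}.

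You construct, via vanishing discount, a bounded pair $(\overline{\lambda},v)$ solving the ergodic HJB equation \eqref{eq:HJ_erg} for the \emph{given} periodic flow $p(\cdot)$, and you stop there. But \eqref{ii} asks for more: $(\overline{\lambda},v,p)$ must be a classical solution of $(\mathrm{MFG}_0)$, which requires that $p$ solve the Kolmogorov equation \eqref{eq:Kol} with the control $a(t)=v(t,0)-v(t,1)$ coming from this particular $v$. Nothing in your argument links the $v$ you built to the optimal control $\alpha^*$ that defines the ergodic NE. Since ergodic minimizers of $J_0$ are highly non-unique (only tails matter), the mere fact that $\alpha^*$ is optimal and that $\alpha^a$ is also optimal (from your verification step) does \emph{not} imply $\alpha^*=\alpha^a$, and hence does not give $p(t)=\P(X^{\alpha^a}_t=1)$.

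The paper closes this gap with an extra argument you are missing: first reduce to $\alpha^*=\alpha_p\in\cA_s$ via Lemma~\ref{lem:equal} and Remark~\ref{rem:why_erg}, so that $\alpha^*$ is $\tau$-periodic with the same period as $p$. Then suppose for contradiction that $\alpha^*$ fails to be the pointwise Hamiltonian minimizer on a set $I\subset[0,\tau)$ of positive measure. Applying Dynkin's formula over one period to the bounded $v$ and exploiting periodicity ($\E[v(\tau,X^*_\tau)]=\E[v(0,X^*_0)]$) yields a strict inequality $\tau\overline{\lambda}+\delta\int_I\P(X^*_t=x')\,\d t\leq \tau\overline{\lambda}$, which is impossible since $\sigma>0$ keeps $\P(X^*_t=x')>0$. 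This forces $\alpha^*(t,0)=a(t)^+$, $\alpha^*(t,1)=a(t)^-$ a.e., and hence the Kolmogorov coupling. You have also misidentified the ``main obstacle'': the vanishing-discount construction is routine (and indeed relegated to the Appendix), whereas this coupling step is the substantive part of \eqref{i}$\Rightarrow$\eqref{ii}.
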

\begin{proof}
    $(\ref{ii}) \Rightarrow (\ref{i})$. Since $v(\cdot,x)$ is continuously differentiable, Dynkin's formula implies, see \cite[Appendix B]{fleming_soner}, that for any feedback control $\alpha\in\mathcal{A}$, 
    \begin{equation*}
        \E[v(T,X^{\alpha}_T)] - \E[v(0,X^{\alpha}_0)] = \E \int_0^T 
        \left(\partial_t + \ig^\alpha\right)v(t,X^{\alpha}_t) \,\d t,
    \end{equation*}
    where $X^{\alpha}_0\sim p(0)$ and $\ig^{\alpha}$ is the infinitesimal 
    generator of the controlled Markov chain: for $x\in \cX$, $t\ge 0$,
    \begin{equation*}
        \ig^{\alpha}\Phi(t,x) =(\sigma^2+\alpha(t,x)) (\Phi(t,x+1)-\Phi(t,x)).
    \end{equation*}
    Since $(\overline\lambda,v)$ satisfy \eqref{eq:HJ_erg} and by definition of the Hamiltonian \eqref{eq:Hamiltonian}, we have
    \begin{equation*}
        \E \int_0^T \left(\partial_t + \ig^{\alpha}\right)v(t,X^{\alpha}_t) \,\d t 
        \geq T\overline{\lambda} - \E \int_0^T \left(\frac{1}{2}\alpha(t,X^\alpha_t)^2 
        + \kappa \ell(X^{\alpha}_t,p(t))\right) \,\d t.
    \end{equation*}
    Using the boundedness of $v$ we divide by $T$ and send $T\uparrow\infty$ to conclude that $\overline{\lambda}$ is a lower bound for the ergodic cost and that $\alpha^a$ is optimal where $a$ is given by \eqref{eq:opt_feedback}. Since $\P(X^a_t=1)$ satisfies the same forward Kolmogorov equation \eqref{eq:Kol} as $p(\cdot)$, by uniqueness  $p(t)=\P(X^a_t=1)$ for all $t\geq0$. Together with the periodicity condition, this proves that $p(\cdot)$ is an ergodic NE. 
    \vspace{4pt}

\noindent
    $(\ref{i}) \Rightarrow (\ref{ii})$. Suppose $p(t)$ is an ergodic NE. In view
    of Lemma \ref{lem:equal} and Remark  \ref{rem:why_erg}
    there is a corresponding 
    optimal control $\alpha^*\in\cA_s$ and it is given by $\alpha^*=\alpha_p$.
    The explicit formula for $\alpha_p$ in Appendix \ref{app:equal}
    shows that $p$ and $\alpha^*$ have a common period $\tau$.
    
    Let $(\overline{\lambda},v(t,x))$ be a bounded solution of the dynamic programming equation 
    \eqref{eq:HJ_erg} with $p(t)$, constructed as the vanishing discount limit as in Lemma \ref{lem:vanish-disc}, 
    and define $a$ by \eqref{eq:opt_feedback}. 
    We claim that $\alpha^*(t,0)=a(t)^+$ and $\alpha^*(t,1)=a(t)^-$ for Lebesgue a.e.~$t\geq0$. As $p(t)=\P(X^{\alpha^*}_t=1)$ solves the Kolmogorov equation \eqref{eq:Kol} with $\alpha^*(t)$ replacing $a(t)$, this claim would imply that the triplet  $(\overline{\lambda},v,p)$ is a bounded classical solution of $(\mathrm{MFG}_0)$. So it suffices to prove this claim.
    
   Let $X^*_t$ be the Markov chain starting in $X^*_0\sim p(0)$ and controlled by $\alpha^*$. 
   Then, as $p(\cdot)$ and $(\alpha^*(\cdot,0), \alpha^*(\cdot,1))$ are
   $\tau$-periodic, we have
    \begin{equation*}
        \underset{T\uparrow\infty}{\overline{\lim}} \ \frac1T \ 
        \E \int_0^T \left(\frac{1}{2}(\alpha^*(t,X^{\alpha^*}_t))^2 
        + \kappa \ell(X^*_t,p(t))\right) \,\d t = \frac{1}{\tau}
        \E \int_0^\tau \left(\frac{1}{2}(\alpha^*(t,X^{\alpha^*}_t))^2 + \kappa \ell(X^*_t,p(t))\right) \,\d t.
    \end{equation*}
    Towards a contradiction, assume there is $x'\in\cX$, $\delta>0$, 
    and a subset $I\subset[0,\tau)$ of positive Lebesgue measure satisfying
    \begin{equation}\label{eq:contra}
    \begin{split}
        &(\sigma^2+\alpha^*(t,x')) (v(t,1+x')-v(t,x')) + \frac{1}{2}(\alpha^*(t,x'))^2 + \kappa\ell(x',p(t)) - \delta\\ 
        &\qquad \geq H(x',v(t,1+x')-v(t,x'),p(t)),\qquad \forall t \in I.
    \end{split}
    \end{equation}
    By periodicity, equation \eqref{eq:HJ_erg}, and the Dynkin's formula, 
    \begin{align*}
        0&= \E[v(\tau,X^*_\tau)] - \E[v(0,X^*_0)]
        = \E \int_0^\tau (\partial_t + \ig^{\alpha^*})\, v(t,X^*_t) \,\d t\\
        &\geq \tau \overline{\lambda} - \E \int_0^\tau \left(\frac{1}{2}(\alpha^*(t,X^{\alpha^*}_t))^2 
        + \kappa \ell(X^*_t,p(t))\right) \,\d t + \delta \int_{ I} \P(X^*_t=x')\,\d t.
    \end{align*}
    Hence, $\tau \overline{\lambda} +  \delta \int_I \P(X^*_t=x')\,\d t \ \leq \ \tau \overline{\lambda}$. This is impossible, as $\sigma>0$ precludes the Markov chain $X^*$ to be identically equal to one state. This proves the claim.
    \vspace{4pt}

\noindent
    $(\ref{iii}) \Rightarrow (\ref{ii})$. Let $(a,p)$ be a  continuous 
    periodic solution of (\ref{eq:Kol},\ref{eq:HJ_control}) with period $\tau>0$.
    For any $\lambda \in \R$, and $x \in \cX$, $t \in [0,\tau]$ set
    \begin{align*}
    V(t,0):= a(0)+\int_0^t [\lambda- H(0,-a(s),p(s))]\ \d s,\quad 
    V(t,1):= \int_0^t [\lambda- H(1,a(s),p(s))]\ \d s,
    \end{align*}
    and $A(t):= V(t,0)-V(t,1)$. We now use the fact that $a$ satisfies \eqref{eq:HJ_control} and the explicit form of $H$, to compute
    \begin{align*}
    \Dot A(t)&=H(1,a(t),p(t)) -H(0,-a(t),p(t))
    = 2\sigma^2 a(t) -\frac12(a(t)^-)^2 +\frac12 ((-a(t))^-)^2 - \kappa(2p(t)-1)\\
    &= 2\sigma^2 a(t) +\frac12 \mathrm{sign}(a(t))a(t)^2- \kappa(2p(t)-1)
    = \Dot{a}(t).
    \end{align*}
    As $A(0)=a(0)$, $A\equiv a$.  Therefore, $V$ solves the dynamic programming equation \eqref{eq:HJ_erg} with $p(\cdot)$.  Since by hypothesis $p(\cdot)$ solves \eqref{eq:Kol} with $a$, the triplet $(\lambda,V,p)$ is a classical solution of $(\mathrm{MFG}_0)$ for any $\lambda$.  To obtain a bounded solution, we choose
    $$
    \lambda =\frac{1}{\tau} \int_0^\tau  H(1,a(s),p(s)) \d s.
    $$
    Since $a$ and $p$ are $\tau$-periodic, the above choice of $\lambda$ ensures that $V(\cdot,1)$ is also $\tau$-periodic. Since $V(\cdot,0) = V(\cdot,1) + A(\cdot)$, it is also periodic. Therefore, $V$ is periodic, and hence bounded.
\vspace{4pt}

\noindent
    $(\ref{ii}) \Rightarrow (\ref{iii})$.  Let $(\overline{\lambda},v,p)$ be a bounded, classical solution of $(\mathrm{MFG}_0)$, and let $a$ be as in \eqref{eq:opt_feedback}.  Then, a direct calculation shows that the pair $(a,p)$ solves (\ref{eq:Kol},\ref{eq:HJ_control}).
\end{proof}

\subsection{Discounted MFG System}

Given a discount factor $\beta>0$, a flow of probabilities $p(t)$, $t\ge0$, and $x \in \cX$, the optimal control problem of the representative oscillator is given by
\begin{equation}
\label{eq:value}
    v(t,x) := \inf_{\alpha\in\mathcal{A}} \ \E \int_t^{\infty} e^{\beta (t-u)} \left(\frac{1}{2}\alpha(u,X^\alpha_u)^2 + \kappa \ell(X^{\alpha}_u,p(u))\right) \d u,
\end{equation}
where $X^\alpha_t$ is as before and $X^\alpha_t=x$.  Let $H$ be as in \eqref{eq:Hamiltonian}.  Then, the dynamic programming equation for this problem is
\begin{equation}
\label{eq:HJ_beta}
-\partial_t v(t,x) + \beta v(t,x) = H(x,v(t,x+1)-v(t,x),p(t)),
\end{equation}
for all $x \in \cX$, $t \ge0$.  Moreover,  the optimal control $a$ is given by \eqref{eq:opt_feedback}.  Then, in view of Definition \ref{def.mfgbeta}, $p(\cdot)$ is a discounted NE if it solves \eqref{eq:Kol} with this control $a$. 

We say that a pair $(v,p)$ is a \emph{classical solution} of $(\mathrm{MFG}_\beta)$ with initial condition $p(0)\in[0,1]$ if $v$ is a classical solution of the dynamic programming equation \eqref{eq:HJ_beta} and $p$ is a classical solution of \eqref{eq:Kol} with $a$ given by \eqref{eq:opt_feedback}.
As for the ergodic cost we have the following characterization.

\begin{Prop}
\label{pr:infinite}
For $\beta>0$ and a continuous flow of probabilities $p(\cdot)$, the following are equivalent:
    \begin{enumerate}[(i)]
        \item \label{i-beta}
        $p(\cdot)$ is a discounted Nash equilibrium.
        \item \label{ii-beta}
        There is $v:[0,\infty)\times \cX \mapsto \R$ such that $(v,p)$ is a bounded classical solution  of $(\mathrm{MFG}_\beta)$.
         \item \label{iii-beta}
        There is a feedback control $a\in \cA_c$ such that the pair $(a,p)$ is a bounded solution of  {\rm{(}}\ref{eq:Kol},\ref{eq:HJ_control}{\rm{)}}.
    \end{enumerate}
\end{Prop}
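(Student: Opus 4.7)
The plan is to mimic the cyclic proof of Proposition \ref{pr:ergodic}, replacing the ergodic balance $\overline\lambda$ by the discount $\beta$ and the periodicity requirement by the automatic bound $0\le v\le\kappa/\beta$ that follows from \eqref{eq:value} and $0\le\ell\le1$. Accordingly, I would establish (\ref{ii-beta})$\Rightarrow$(\ref{i-beta}), (\ref{i-beta})$\Rightarrow$(\ref{ii-beta}), (\ref{iii-beta})$\Rightarrow$(\ref{ii-beta}), and (\ref{ii-beta})$\Rightarrow$(\ref{iii-beta}) in that order.

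For (\ref{ii-beta})$\Rightarrow$(\ref{i-beta}), I would apply Dynkin's formula to $e^{-\beta t}v(t,X^\alpha_t)$ for an arbitrary $\alpha\in\cA$. The HJB equation \eqref{eq:HJ_beta} together with the definition of $H$ yields
\[
v(0,x)\le \E\int_0^T e^{-\beta t}\Big(\tfrac12\alpha(t,X^\alpha_t)^2+\kappa\,\ell(X^\alpha_t,p(t))\Big)\,\d t+\E[e^{-\beta T}v(T,X^\alpha_T)].
\]
Boundedness of $v$ forces the last term to vanish as $T\uparrow\infty$, so $v(0,x)\le J_\beta(\alpha)$, with equality attained by $\alpha^a$ where $a$ is given in \eqref{eq:opt_feedback}; uniqueness of \eqref{eq:Kol} then gives $p(t)=\P(X^a_t=1)$. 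For (\ref{i-beta})$\Rightarrow$(\ref{ii-beta}), I would define $v$ by \eqref{eq:value}, note the bound $v\le\kappa/\beta$, and invoke the standard verification theorem to obtain \eqref{eq:HJ_beta}. The identification $\alpha^*(t,x)=a(t)^{\pm}$ a.e.\ follows from the same contradiction argument as in the ergodic proof, with $\int_0^\tau$ replaced by the discounted integral $\int_0^\infty e^{-\beta t}\,\d t$, whose convergence is now guaranteed by positivity of $\beta$ and boundedness of the integrand.

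For (\ref{iii-beta})$\Rightarrow$(\ref{ii-beta}), given a bounded continuous solution $(a,p)$ of (\ref{eq:Kol},\ref{eq:HJ_control}), I would construct $v$ through the backward-in-time integrals
\[
v(t,0):=\int_t^\infty e^{\beta(t-s)}H(0,-a(s),p(s))\,\d s,\qquad v(t,1):=\int_t^\infty e^{\beta(t-s)}H(1,a(s),p(s))\,\d s,
\]
which are finite and bounded because $a$ and $p$ are. By construction each $v(\cdot,x)$ satisfies $-\partial_t v+\beta v=H(x,(-1)^x a,p)$; to recognize this as \eqref{eq:HJ_beta} one must show $A(t):=v(t,0)-v(t,1)\equiv a(t)$. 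The explicit form of $H$ in \eqref{eq:Hamiltonian} gives $H(0,-a,p)-H(1,a,p)=-2\sigma^2 a-\tfrac12\mathrm{sign}(a)a^2+\kappa(2p-1)$, so $A$ satisfies exactly the ODE that \eqref{eq:HJ_control} imposes on $a$. Hence $A-a$ solves $\dot{(A-a)}=\beta(A-a)$, and boundedness of both $A$ and $a$ kills the exponentially growing mode, yielding $A\equiv a$. The implication (\ref{ii-beta})$\Rightarrow$(\ref{iii-beta}) is the same calculation run backwards: set $a:=v(\cdot,0)-v(\cdot,1)$ and subtract the two instances of \eqref{eq:HJ_beta} to recover \eqref{eq:HJ_control}, while \eqref{eq:Kol} holds by definition of a classical solution.

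I expect the main technical step to be the construction in (\ref{iii-beta})$\Rightarrow$(\ref{ii-beta}). In the ergodic case, an additive constant in $v$ was tuned so that the average $\overline\lambda$ made $V(\cdot,1)$ periodic and therefore bounded; here, by contrast, there is no free parameter, and the unique bounded $v$ is pinned down by the backward integral representation. The rigidity $A-a=c e^{\beta t}\Rightarrow c=0$ is what couples the constructed $v$ to the prescribed control $a$, and it is the one place where the change from periodicity to discounting genuinely alters the argument.
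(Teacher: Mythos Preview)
Your proposal is correct and mirrors the paper's own proof essentially step for step: the same Dynkin-based verification for (\ref{ii-beta})$\Rightarrow$(\ref{i-beta}), the same contradiction argument for (\ref{i-beta})$\Rightarrow$(\ref{ii-beta}), the same backward-integral construction of $v$ for (\ref{iii-beta})$\Rightarrow$(\ref{ii-beta}) with the key rigidity $e^{-\beta t}(A-a)$ constant forcing $A\equiv a$, and the direct calculation for (\ref{ii-beta})$\Rightarrow$(\ref{iii-beta}). One small phrasing slip: $A$ does not satisfy ``exactly the ODE that \eqref{eq:HJ_control} imposes on $a$'' (its right-hand side contains $\beta A$ but the nonlinear terms in $a$, not in $A$), yet your stated consequence $\dot{(A-a)}=\beta(A-a)$ is nonetheless correct, so the argument goes through.
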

\begin{proof}
We follow the proof of Proposition \ref{pr:ergodic} closely.\\
$(\ref{ii-beta}) \Rightarrow (\ref{i-beta})$. Let $(v,p)$ be a bounded classical solution of $(\mathrm{MFG}_\beta)$. We apply Dynkin's formula to $e^{-\beta t}v(t,x)$ and use \eqref{eq:HJ_beta} to obtain, for any $\alpha\in\cA$,
\begin{align*}
    \E[e^{-\beta T}v(T,X^\alpha_T)]-\E[v(0,X^\alpha_0)] 
    &= \E \int_0^T e^{-\beta t} (-\beta v+\partial_t v+\ig^\alpha v)(t,X^\alpha_t)\,\d t \\
    &\geq - \E \int_0^T e^{-\beta t} \left(\frac{1}{2}\alpha(t,X^\alpha_t)^2 
    + \kappa \ell(X^{\alpha}_t,p(t))\right) \d t,
\end{align*}
where $\ig^\alpha$ as before denotes the generator of the 
controlled Markov chain $X^\alpha$, and $X^\alpha_0\sim p(0)$. Using the boundedness of $v$ we let $T\uparrow\infty$ to arrive at
$J_\beta(\alpha)\geq \E[v(0,X_0)]$,
for all $\alpha\in\cA$ with equality if $\alpha=\alpha^a\in\cA_s$ where $a$ is defined by \eqref{eq:opt_feedback}. Therefore $\alpha^a$ is optimal, and $\P(X^a_t=1)$ solves the same initial value problem as $p(t)$. Hence $p(t) = \P(X^a_t=1)$ for all $t\geq0$, proving that $p(\cdot)$ is a discounted NE.
\vspace{4pt}

\noindent
$(\ref{i-beta}) \Rightarrow (\ref{ii-beta})$. Let $p(t)=\P(X^*_t=1)$ be a discounted NE with optimal control $\alpha^*$ and optimally controlled state $X^*=X^{\alpha^*}$. It is classical that there exists a smooth bounded solution $v$ to the dynamic programming equation \eqref{eq:HJ_beta}, and it again suffices to show $\alpha^*(t,0)=a(t)^+$ and $\alpha^*(t,1)=a(t)^-$ for Lebesgue a.e.~$t\geq0$ where $a$ is defined by \eqref{eq:opt_feedback}. By the same arguments as above, we infer that $\alpha^a$ is an optimal control with $\E[v(0,X^*_0)]= J_\beta(\alpha^a)$. Towards a contraction, we assume that there exists $\delta>0$, $x'\in\cX$, and $I\subset[0,\infty)$ of positive Lebesgue measure such that \eqref{eq:contra} holds. By the same computation as in the proof of Proposition \ref{pr:ergodic}, we arrive at,
\begin{equation*}
    J_\beta(\alpha^a) = \E[v(0,X^*_0)] \leq J_\beta(\alpha^*) - \delta \int_I\P(X^*_t=x')\,\d t,
\end{equation*}
with $\delta\int_I\P(X^*_t=x')\,\d t>0$. This contradicts 
the optimality of $\alpha^*$ and shows the claim.
\vspace{4pt}

\noindent
 $(\ref{iii-beta}) \Rightarrow (\ref{ii-beta})$. Let $(a,p)$ be a bounded solution of (\ref{eq:Kol},\ref{eq:HJ_control}).  We set
 $$
    v(t,0):= \int_t^\infty e^{\beta(t-u)} H(0,-a(s),p(s))\ \d s,\qquad
    v(t,1):= \int_t^\infty e^{\beta(t-u)} H(1,a(s),p(s))\ \d s,
$$
and $A(t):= v(t,0)-v(t,1)$. We directly show that 
$$
\Dot A(t)= \Dot a(t) + \beta (A(t)-a(t)),\quad \Rightarrow\quad
\frac{\d}{\d t}( e^{- \beta t} (A(t) -a(t)))=0,\quad \Rightarrow\quad
e^{- \beta t} (A(t) -a(t))) =A(0)-a(0).
$$
Since $A$ and $a$ are bounded, we conclude that $a(t)=A(t)=v(t,0)-v(t,1)$.  Therefore,
\begin{align*}
-\partial_t v(t,0) + \beta v(t,0) = H(0,-a(t),p(t))= H(0,v(t,1)-v(t,0),p(t)),\\
-\partial_t v(t,1) + \beta v(t,1) = H(1,a(t),p(t))= H(1,v(t,0)-v(t,1),p(t)).
\end{align*} 
Hence, the pair $(v,p)$ is a classical solution of $(\mathrm{MFG}_\beta)$.
\vspace{4pt}

\noindent
$(\ref{ii-beta}) \Rightarrow (\ref{iii-beta})$.  Let $(v,p)$ be a bounded, classical solution of $(\mathrm{MFG}_\beta)$, 
and let $a$ be as in \eqref{eq:opt_feedback}.  Then, a direct calculation shows that the pair $(a,p)$ solves (\ref{eq:Kol},\ref{eq:HJ_control}).
\end{proof}

\subsection{Change of Variables}
\label{ss:cov}

We have shown that NE are characterized by bounded classical solutions to the coupled system of ordinary differential equations \eqref{eq:Kol} and \eqref{eq:HJ_control}. For further analysis, it is convenient to introduce the following change of variables to achieve symmetry,
\begin{equation*}
 q(t) = 2p(t)-1.
\end{equation*}
Then, (\ref{eq:Kol},\ref{eq:HJ_control}) is equivalent to
\begin{align} 
\nonumber\tag{\ref{eq:HJ_control}}
    \Dot a(t) &= (\beta + 2\sigma^2) a(t) + \frac{1}{2}\mathrm{sign}(a(t))a(t)^2 - \kappa q(t) ,\\
    \Dot q(t) &= a(t) - (2\sigma^2 + |a(t)|)q(t). \label{eq:HJ_controlI}
\end{align}
We study this differential equation on the strip 
$
(a,q)\in D:=\R\times[-1,+1].
$
Notice that $D$ is invariant under the above equations. With an abuse of terminology, we call a flow $q(\cdot)$ a NE if the corresponding probability
flow $p(\cdot)=(q(\cdot)+1)/2$ is a NE.

\section{Stationary Equilibria} \label{sec:Stationary_Equilibria}

A \textit{stationary mean-field game Nash equilibrium} (SNE) is a constant mean-field game equilibrium. We emphasize that in this case the initial condition $q(0)$ is not given anymore, but becomes part of the solution. In view of the results of Section \ref{sec:characterization} and the above change of variables, SNE are given by the second component of the fixed points of the planar system of ordinary differential equations (\ref{eq:HJ_control}, \ref{eq:HJ_controlI}), both in the ergodic and discounted cases.  To compute them, let $S_a$ and $S_q$ denote the nullclines of this system:
$$
    S_a = \{(a,q) \,|\, (\beta + 2\sigma^2) a + \mathrm{sign}(a)a^2/2 - \kappa q = 0 \},\qquad
    S_q := \{(a,q) \,|\, a - (2\sigma^2 + |a|)q = 0 \},
$$
and set
\begin{equation*}
    \kappa_c := 2\beta\sigma^2+4\sigma^4.
\end{equation*}
Clearly, $S_a\cap S_q$ are the fixed points of (\ref{eq:HJ_control}, \ref{eq:HJ_controlI}), and the origin is always in this set.  A direct calculation shows that $S_a\cap S_q$ is a singleton for $\kappa\le \kappa_c$.  However, if $\kappa>\kappa_c$, there are three stationary points $S_a\cap S_q= \{(-\overline{a},-\overline{q}),(0,0),(\overline{a},\overline{q})\}$, where
\begin{equation}\label{eq:abar}
        \Bar{q} := \frac{\Bar{a}}{\Bar{a}+2\sigma^2},\quad 
        \Bar{a} := -(\beta+3 \sigma^2)+\sqrt{(\beta+3\sigma^2)^2+2(\kappa-\kappa_c)}>0.
\end{equation}
Hence, we have the following immediate corollary.
\begin{Lem}
    Both in the ergodic and discounted models, the uniform distribution $q=0$ is an SNE for any $\kappa>0$.  Moreover,
\begin{enumerate}[(i)]
    \item Subcritical case: For $\kappa<\kappa_c$, the uniform distribution is the only SNE.

    \item Supercritical case: For $\kappa>\kappa_c$, there are three SNE given by $q=-\Bar{q},\,0,\,\Bar{q}$.
    \item Full synchronization: As $\kappa\uparrow\infty$, $\Bar{q}\uparrow1$.
\end{enumerate}
\vspace{3pt}

\noindent
In the above, $\beta=0$ corresponds to the ergodic cost.
\end{Lem}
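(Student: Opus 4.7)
The plan is to exploit the symmetry of the system (\ref{eq:HJ_control}, \ref{eq:HJ_controlI}) under $(a,q)\mapsto(-a,-q)$ and then reduce the fixed-point equation to a single scalar quadratic in $a$ whose discriminant changes sign exactly at $\kappa=\kappa_c$. Since stationary Nash equilibria are the second coordinates of points in $S_a\cap S_q$ (already noted in the text), everything reduces to a direct computation, and the origin is immediately a common zero, giving the uniform SNE for every $\kappa>0$.

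To find the non-trivial fixed points, I would restrict attention to $a>0$ (the case $a<0$ follows by the $(a,q)\mapsto(-a,-q)$ symmetry, and $a=0$ forces $q=0$ from $S_q$ and hence only the origin). On $\{a>0\}$, the nullcline $S_q$ gives $q=a/(a+2\sigma^2)$, and substituting into the equation for $S_a$ and dividing by $a$ produces the scalar equation
\begin{equation*}
\bigl((\beta+2\sigma^2)+\tfrac{1}{2}a\bigr)(a+2\sigma^2)=\kappa,
\end{equation*}
which after expanding and using $\kappa_c=2\beta\sigma^2+4\sigma^4$ becomes the quadratic
\begin{equation*}
\tfrac{1}{2}a^2+(\beta+3\sigma^2)\,a-(\kappa-\kappa_c)=0.
\end{equation*}
The larger root is exactly $\bar a$ in \eqref{eq:abar}, and it is strictly positive if and only if $\kappa>\kappa_c$; otherwise both roots are non-positive (in fact the product is $-2(\kappa-\kappa_c)\ge 0$ and the sum is $-2(\beta+3\sigma^2)<0$), so no admissible $a>0$ exists. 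This simultaneously establishes (i) and (ii): below threshold the only fixed point is the origin, while above threshold the two symmetric fixed points $(\pm\bar a,\pm\bar q)$ appear, with $\bar q$ obtained from $S_q$ as in \eqref{eq:abar}.

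For (iii), as $\kappa\uparrow\infty$ the explicit formula for $\bar a$ yields $\bar a\uparrow\infty$, so $\bar q=\bar a/(\bar a+2\sigma^2)\uparrow 1$, which is the full synchronization statement. Since the whole argument is algebraic and uses only the explicit form of the nullclines, there is no real obstacle; the only thing to be slightly careful about is confirming that the branch $a<0$ truly contributes nothing new beyond the mirror image of the $a>0$ analysis, which the $(a,q)\mapsto(-a,-q)$ symmetry of both $S_a$ and $S_q$ makes transparent. Finally, $\beta=0$ is allowed throughout and reproduces the ergodic case, so no separate argument is needed for the two models.
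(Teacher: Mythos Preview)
Your proposal is correct and is precisely the ``direct calculation'' that the paper invokes but does not spell out: both the paper and you identify stationary equilibria with $S_a\cap S_q$, and your substitution of $q=a/(a+2\sigma^2)$ into the $S_a$ equation to obtain the quadratic $\tfrac12 a^2+(\beta+3\sigma^2)a-(\kappa-\kappa_c)=0$ is exactly what yields the formula \eqref{eq:abar}. The sign analysis via the product and sum of the roots, the symmetry reduction to $a>0$, and the limit $\bar a\uparrow\infty\Rightarrow\bar q\uparrow1$ are all sound, so there is nothing to add.
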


\section{Analysis of the Discounted Problem}
\label{sec:discounted}

We fix $\beta>0$ and study all time-inhomogeneous discounted NE starting from an arbitrary initial distribution. In view of Proposition \ref{pr:infinite}, and the change of variables introduced in Section \ref{ss:cov}, these NE are given by the bounded solutions of  the nonlinear dynamical system  (\ref{eq:HJ_control}, \ref{eq:HJ_controlI}) with $\beta>0$.

\subsection{Linear Stability Analysis of Equilibria}
\label{ss:linear}
The local behavior of the dynamical system around the origin is described by the spectral properties of the linearized system,
\begin{equation*}
    \frac{\d}{\d t}
    \begin{pmatrix}
        a(t)\\
        q(t)
    \end{pmatrix}
    =
    \begin{pmatrix}
        \beta+2\sigma^2 & -\kappa\\
        1 & -2\sigma^2
    \end{pmatrix}
    \begin{pmatrix}
        a(t)\\
        q(t)
    \end{pmatrix}.
\end{equation*}
An analysis of the eigenvalues of this system shows that:
\begin{itemize}
    \item \textit{Subcritical regime $\kappa<\kappa_c$}: The origin is the only stationary point, and it is a saddle point (one positive, one negative eigenvalue).

    \item \textit{Supercritical regime (A) $\kappa_c<\kappa<\kappa_c+\beta^2/4$}: There are three stationary points. The origin is unstable (two positive eigenvalues). The other two stationary points are saddles.

    \item \textit{Supercritical regime (B) $\kappa>\kappa_c+\beta^2/4$}: There are three stationary points. The origin is a spiral source (two complex eigenvalues with positive real part). The other two stationary points remain saddles.
\end{itemize}

\subsection{Global Analysis}
\label{ss:global} 

We rewrite the equations (\ref{eq:HJ_control}, \ref{eq:HJ_controlI}) as
$(\Dot{a}, \Dot{q})^{\intercal}=f(a,q)$, where
$$
f(a,q):=
\begin{pmatrix}
(\beta + 2\sigma^2) a + \mathrm{sign}(a)a^2/2 - \kappa q \\
 a - (2\sigma^2 + |a|)q  
\end{pmatrix}.
$$
A direct calculation shows that $\mathrm{div}(f)\equiv \beta$. Moreover, for a given initial condition $(a,q)\in D$, we let $\Phi(t,a,q)$ be the unique solution at time $t\in I(a,q)$. Here $I(a,q)\subset\R$ is the maximal interval where the solution is defined. An \emph{orbit} of the dynamical system (\ref{eq:HJ_control}, \ref{eq:HJ_controlI}) is any set given by $\{\Phi(t,a,q) : t \in I(a,q) \}$ for some $(a,q)\in D$.

We start with a result that is repeatedly used in our arguments and which follows from the fact that $\mathrm{div}(f)>0$. 

\begin{Lem}\label{lem:no_closed_orbits} 
If $U\subset D$ is an open bounded set whose boundary is the closure of finitely many orbits, then $U=\emptyset$. In particular, the dynamical system (\ref{eq:HJ_control}, \ref{eq:HJ_controlI}) does not have closed orbits and all bounded solutions of this dynamical system converge to one of the stationary points of the system. Similarly, as time goes to minus infinity, all bounded solutions either converge to one of the stationary points or hit the lines $\{q=\pm1\}$.  
\end{Lem}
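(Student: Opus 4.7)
The argument is a Bendixson--Dulac-type application of the divergence theorem, combined with the Poincar\'e--Bendixson theorem. The essential identity is $\mathrm{div}(f)\equiv \beta>0$. For the main claim about $U$, each smooth arc of $\partial U$ lies on an orbit, hence is tangent to $f$, so the outward normal flux $f\cdot n$ vanishes pointwise on such arcs; the remaining points of $\partial U$ arise as limit points of these orbits and must therefore be equilibria of $f$ (so $f$ still vanishes there), forming in particular a set of zero linear measure. The divergence theorem then yields
\begin{equation*}
\beta\,\lvert U\rvert \;=\; \int_{U}\mathrm{div}(f)\,\d A \;=\; \oint_{\partial U} f\cdot n\,\d s \;=\; 0,
\end{equation*}
which, since $\beta>0$, forces $\lvert U\rvert=0$ and hence $U=\emptyset$.

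\paragraph{From the abstract statement to the dynamical consequences.} A closed orbit $\gamma\subset D$ is a Jordan curve and therefore bounds a non-empty open region whose boundary is the single orbit $\gamma$; the first assertion forces that region to be empty, a contradiction, so no closed orbit exists. For a bounded forward trajectory $(a(\cdot),q(\cdot))$ on $[0,\infty)$, its $\omega$-limit set $\Omega$ is non-empty, compact, connected, and invariant. The Poincar\'e--Bendixson theorem then says $\Omega$ is either a single equilibrium, a closed orbit, or a union of equilibria joined by heteroclinic/homoclinic orbits. The second possibility has just been ruled out, while the third would form a Jordan cycle whose interior is an open bounded region with boundary equal to the closure of finitely many orbits---empty by the first assertion, contradicting the existence of the enclosing cycle. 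Hence $\Omega$ is a single equilibrium and the trajectory converges to it.

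\paragraph{Backward time and main obstacle.} The divergence identity and the flux computation are indifferent to the direction of time, since tangency of $f$ to an orbit is time-reversal invariant; the same reasoning therefore applies to $\alpha$-limit sets. A bounded solution that remains in $D$ for all $t\le 0$ converges to an equilibrium as $t\downarrow -\infty$. Since $D=\R\times[-1,+1]$ is a strip and $a$ stays bounded along a bounded solution, the only alternative is that the maximal backward interval of existence in $D$ is bounded below, in which case the trajectory exits $D$ through the lines $\{q=\pm 1\}$. I expect the main technical obstacle to be the rigorous application of the divergence theorem on $U$, whose boundary is piecewise smooth but may have cusps or tangential contacts at equilibria and need not be a $C^1$ Jordan curve. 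This is handled by exhausting $U$ from inside by smoothly bounded subdomains on whose approximating boundaries $f\cdot n$ is controlled, and passing to the limit.
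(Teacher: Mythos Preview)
Your proof is correct and follows essentially the same route as the paper: the identity $\mathrm{div}(f)\equiv\beta>0$ combined with the divergence theorem rules out any open region bounded by orbits, and Poincar\'e--Bendixson then forces bounded trajectories to converge to equilibria. If anything, you are more careful than the paper in explicitly treating the heteroclinic-cycle case of the $\omega$-limit set and in flagging the regularity issue for the divergence theorem at boundary equilibria.
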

\begin{proof}
    Toward a contraposition, suppose that $U\neq\emptyset$. Then, using the Gauss' lemma (divergence theorem), we integrate along $\partial U$ to arrive at
    $$
    0< \beta\,\mathrm{Leb}(U) = \int_U \mathrm{div} f(x)\,\d x
    = \int_{\partial U} f(x) \cdot \nu(x) \,\d S(x) = 0,
    $$
    where $S$ is the ``surface'' measure and $\nu(x)$ denotes the exterior unit normal, which is orthogonal to $f(x)$ as $\partial U$ is an orbit, up to finitely many points. This contradiction implies that there are no bounded closed orbits. Also, by the Poincar\'{e}-Bendixson theorem, \cite{hale_ordinary_2009}(Theorem II.1.3), all bounded solutions of a planar dynamical system must converge to a stationary point or a bounded closed orbit.  Hence, as time goes to plus or minus infinity the bounded solutions either converge to a fixed point or hit $\{q=\pm1\}$.  Moreover, the flow cannot cross $\{q=\pm1\}$ going forward.
\end{proof}

Note that this lemma rules out homoclinic orbits, and using a symmetry argument, heteroclinic orbits connecting non-trivial fixed points as well. We now study the structure of the Nash equilibria.

\begin{Prop}[Subcritical regime]
\label{pro:sub}
Suppose that $\kappa<\kappa_c$. Then, there exists a strictly increasing function $\mathfrak{n}: [-1,1] \mapsto \R$ with $\mathfrak{n}(0)=0$ such that its graph $\{(\mathfrak{n}(q),q) : q \in [-1,1]\}$ is the stable manifold of the origin of the dynamical system (\ref{eq:HJ_control}, \ref{eq:HJ_controlI}). All Nash equilibria are included in the graph of $\mathfrak{n}$. In particular, for any $q\in[-1,1]$ there exists exactly one discounted Nash equilibrium starting from $q$, and all Nash equilibria converge to the origin.
\end{Prop}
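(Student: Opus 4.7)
The plan is to construct $\mathfrak{n}$ as the global stable manifold of the saddle at the origin and identify it with the set of bounded solutions of (\ref{eq:HJ_control},\ref{eq:HJ_controlI}) via Proposition \ref{pr:infinite} and Lemma \ref{lem:no_closed_orbits}. Because the vector field is odd under $(a,q)\mapsto(-a,-q)$, it will be enough to construct the upper branch in $\{q\ge 0\}$ and extend by symmetry, which automatically yields $\mathfrak{n}(-q)=-\mathfrak{n}(q)$. From the linear analysis in Section~\ref{ss:linear}, the origin is a hyperbolic saddle with negative eigenvalue $\lambda_-=\tfrac12(\beta-\sqrt{\beta^2+4(\kappa_c-\kappa)})$, and a short check of the eigenvector equation shows that the stable direction has slope $\mathrm{d}a/\mathrm{d}q=2\sigma^2+\lambda_->0$ for every $\kappa>0$. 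The stable manifold theorem then yields a local $C^1$ graph $a=\mathfrak{n}_{\mathrm{loc}}(q)$ with $\mathfrak{n}_{\mathrm{loc}}'(0)>0$, which I extend globally by the backward flow to define $W^s$.

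The heart of the argument is to pin down $W^s$ in the first quadrant using the two nullclines $S_a:a=h_1(q)=-(\beta+2\sigma^2)+\sqrt{(\beta+2\sigma^2)^2+2\kappa q}$ and $S_q:a=h_2(q)=2\sigma^2q/(1-q)$. In the subcritical regime one verifies $h_1<h_2$ on $(0,1)$, which partitions the quadrant into three regions (I, II, III) on which $(\mathrm{sign}\,\dot a,\mathrm{sign}\,\dot q)$ equals $(-,-)$, $(+,-)$, $(+,+)$ respectively. A direct computation yields $\dot q<0$ on $S_a$ and $\dot a>0$ on $S_q$ in this quadrant, both using $\kappa<\kappa_c$, and an implicit function argument then gives the key ``irreversibility'': forward-time crossings of $S_a$ go one way from region I to region II, and forward-time crossings of $S_q$ go one way from region II to region III. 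Because the slopes at the origin satisfy $\mathfrak{n}_{\mathrm{loc}}'(0)<h_1'(0)$ (using the identity $\kappa=(2\sigma^2+\lambda_-)(\beta+2\sigma^2-\lambda_-)$), $W^s$ enters the interior of region I near the origin, and irreversibility forces the entire upper branch of $W^s$ to remain in the closure of region I. In that region $\dot a<0$ and $\dot q<0$, so as $t$ decreases both $a$ and $q$ strictly increase along $W^s$; this gives the monotonicity of $\mathfrak{n}$ directly and the a priori bound $a\le h_1(1)<\infty$.

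To extend $\mathfrak{n}$ to $q=1$, I invoke Lemma \ref{lem:no_closed_orbits}: the backward orbit is bounded, so it must either converge to a fixed point or hit $\{q=1\}$. The only fixed point is the origin, and convergence there would produce a homoclinic loop, excluded by the divergence identity in Lemma \ref{lem:no_closed_orbits}. Hence the backward orbit reaches $\{q=1\}$ in finite time and $\mathfrak{n}$ extends continuously to $[0,1]$; symmetry completes the definition on $[-1,1]$. Finally, by Proposition \ref{pr:infinite} a discounted NE starting at $q_0$ is exactly the $q$-component of a bounded solution with $q(0)=q_0$, and by Lemma \ref{lem:no_closed_orbits} any bounded solution converges to a fixed point as $t\to\infty$, which must be the origin in the subcritical regime. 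Thus the set of NE coincides with $W^s=\mathrm{graph}(\mathfrak{n})$, and the unique point of $W^s$ with second coordinate $q_0$ yields the unique NE, which converges to the uniform distribution.

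The hard part is the trapping argument showing that the upper branch of $W^s$ remains in the closure of region I: it depends both on the nullcline inequality $h_1<h_2$ on $(0,1)$ and on the one-sided nature of the crossings across $S_a$ and $S_q$, each a consequence of the strict subcritical inequality $\kappa<\kappa_c$. Without it, either spiraling behavior or additional interior fixed points can emerge to break the region structure, as indeed happens in the supercritical case.
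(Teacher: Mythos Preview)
Your proof is correct and follows essentially the same route as the paper: trap the upper branch of the stable manifold in the region above $S_a$ (the paper's $\cQ$, your region~I) where both components of the flow are negative, deduce monotonicity, push backward to $\{q=1\}$, and then identify Nash equilibria with the stable manifold via Proposition~\ref{pr:infinite} and Lemma~\ref{lem:no_closed_orbits}. Your write-up is more explicit than the paper's (eigenvector slopes, the inequality $\mathfrak n_{\mathrm{loc}}'(0)<h_1'(0)$, the a~priori bound $a\le h_1(1)$), and your three-region decomposition with the $S_q$-crossing discussion is extra scaffolding that is not actually needed---since $h_1<h_2$ the only interior boundary of region~I is $S_a$---but it does no harm.
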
  

\begin{wrapfigure}[20]{r}{0.5\textwidth}
    \centering
    \vspace{-1em}
    \includegraphics[width=0.8\linewidth]{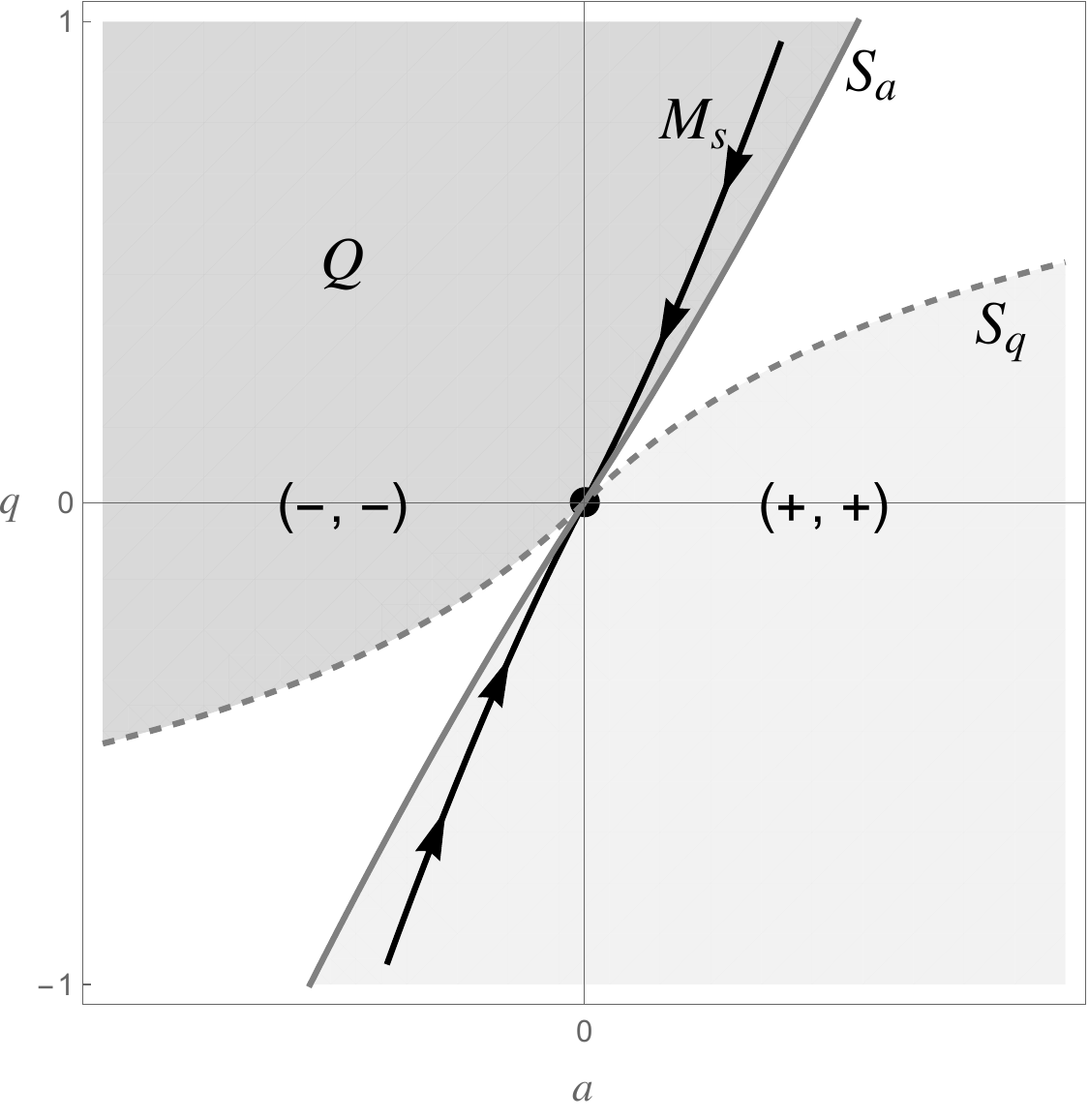}
    \caption{Subcritical discounted system (\ref{eq:HJ_control}, \ref{eq:HJ_controlI}).}
    \label{fig:Proof_Discount_Sub}
\end{wrapfigure}

\noindent\textbf{Proof.}\,\,\,
It suffices to show that the stable manifold $\mathcal{M}_s$ of the origin is a monotone curve in the phase space that hits the boundary $\{q=\pm 1\}$ as $t\to-\infty$. Additionally, by symmetry it is enough to provide the proof for the upper boundary. Let $\cQ$ denote the region above the nullclines $S_a$ and $S_q$, as shown in Figure \ref{fig:Proof_Discount_Sub}. Any trajectory passing through $\cQ$ is strictly monotone in the phase space as both components of $f$ are negative on $\cQ$, and hence is represented by a function. Moreover, linear analysis around the origin shows that the upper stable manifold $\cM_s\cap\{a>0\}$ lies locally inside $\cQ$, and therefore it is locally represented by a strictly monotone graph. If time is reversed, this manifold cannot leave $\cQ$ as $S_a$ can only be crossed vertically, and hence follows a monotone graph for all times. Since there are no fixed points of the dynamical system in $\cQ$, it cannot converge to a point on $\cQ$ and hence crosses the upper boundary $\{q=1\}$ in finite time.
The same analysis in $\{a<0\}$ completes the construction of $\mathfrak{n}$. In summary, there is a strictly increasing function $\mathfrak{n}: [-1,1] \mapsto \R$ with $\mathfrak{n}(0)=0$ such that $\cM_s= \{ (\mathfrak{n}(q),q) : q \in [-1,1]\}$. Then, for any $q\in[-1,1]$, the solution of (\ref{eq:HJ_control}, \ref{eq:HJ_controlI}) starting at $(\mathfrak{n}(q),q)$ remains in the graph of $\mathfrak{n}$ and converges to the origin as time goes to infinity. Moreover, by Lemma \ref{lem:no_closed_orbits}, all bounded solutions  must converge to the origin which is the unique stationary point. Thus they must be contained in the stable manifold $\cM_s$ of the origin. As Nash equilibria are precisely the bounded solutions of the dynamical system, we conclude that all of them must be in $\cM_s$ and thus in the graph of $\mathfrak{n}$.\hfill$\square$

\vspace{1em}

Recall that in the supercritical regime, we have three stationary points of the dynamical system: $(-\overline{a},-\overline{q})$, $(0,0)$, and $(\overline{a},\overline{q})$. We now study the supercritical regime whose relevant phase diagrams are drawn in Figures \ref{fig:Phase_Diag_disc_supA} and \ref{fig:Phase_Diag_disc_supB}.

\begin{Thm}[Supercritical regime]
\label{thm:super}
    Suppose that $\kappa>\kappa_c$. Then, there is a curve $\cC\subset D$ that connects all three stationary points of the dynamical system (\ref{eq:HJ_control}, \ref{eq:HJ_controlI}) and hits the boundary $\{q=\pm1\}$. It is given by the stable manifolds of the non-trivial equilibria joining the origin. All discounted NE are included in $\cC$ so that for any $q\in[-1,+1]$ there is at least one NE starting from $q$. Moreover,
    \begin{enumerate}[(A)]
        \item if $\kappa<\kappa_c+\beta^2/4$, then $\cC$ is a strictly monotone curve, so that there is a unique discounted NE starting from $q$, and when $q\neq0$ it converges to $\mathrm{sign}(q)\overline{q}$.

        \item if $\kappa>\kappa_c+\beta^2/4$, then $\cC$ spirals around the origin. In particular, for initial data $q$ close to the origin there exist many NE that spiral around the origin before converging to one of the self-organizing SNE. However, for $|q|$ sufficiently large, there is a unique NE starting from $q$.
    \end{enumerate}
\end{Thm}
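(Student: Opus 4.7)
The plan is to construct the curve $\cC$ as the union of the three stationary points and the two branches of the stable manifolds of the saddles $(\pm\overline a,\pm\overline q)$, and then to invoke Lemma \ref{lem:no_closed_orbits} together with Proposition \ref{pr:infinite} to conclude that every bounded orbit, hence every discounted NE, lies on $\cC$. By the linear analysis of Section \ref{ss:linear} each of $(\pm\overline a,\pm\overline q)$ admits a one-dimensional $C^1$ stable manifold $\cM_s^{\pm}$ with two local branches, and by the $(a,q)\mapsto(-a,-q)$ symmetry of $(\ref{eq:HJ_control},\ref{eq:HJ_controlI})$ it suffices to analyze $\cM_s^{+}$.

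I would then do a nullcline analysis in the half-plane $\{a>0\}\cap D$. The curves $S_a$ and $S_q$ cross at $(\overline a,\overline q)$ and cut this half-strip into regions on which the sign pattern of $f$ is constant, as in Figure \ref{fig:Proof_Discount_Sub}. Tracing $\cM_s^{+}$ backward in time from $(\overline a,\overline q)$, the outer branch enters the region where $\dot q>0$ and $\dot a>0$, hence is monotone in both coordinates and reaches $\{q=1\}$ in finite time. The inner branch enters the complementary region between the origin and $(\overline a,\overline q)$. Lemma \ref{lem:no_closed_orbits} forbids homoclinic loops at the origin, and the symmetry together with the same lemma forbids a heteroclinic connection between $(\overline a,\overline q)$ and $(-\overline a,-\overline q)$, so this inner branch has no option but to converge to the origin as $t\to-\infty$ (possibly with rotation). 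Concatenating these two backward branches with the mirror image for $\cM_s^{-}$ and the three stationary points produces a connected curve $\cC\subset D$ whose $q$-projection covers $[-1,1]$. The inclusion of every NE in $\cC$ then follows from Proposition \ref{pr:infinite} (NE are bounded solutions) and the Poincar\'e--Bendixson conclusion of Lemma \ref{lem:no_closed_orbits}.

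For part (A) the origin is an unstable node with two real positive eigenvalues, so every backward orbit near it approaches it tangent to an eigenvector without rotation. Combined with the monotone invariant-region argument above, this shows that the portion of $\cC$ in $\{a\ge0\}$ is the graph of a strictly monotone function of $q$; applying the $\Z/2$-symmetry extends this to a global strictly increasing curve joining $(\mathfrak n(-1),-1)$ through the three equilibria to $(\mathfrak n(1),1)$, exactly analogous to $\mathfrak n$ from Proposition \ref{pro:sub}. Uniqueness of the NE from any initial $q$ and convergence to $\mathrm{sign}(q)\overline q$ when $q\ne0$ follow immediately, while $q=0$ stays at the uniform SNE.

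For part (B) the eigenvalues at the origin are complex with positive real part, so by a Hartman--Grobman or direct linearization argument every backward orbit that ever enters a sufficiently small neighborhood of the origin spirals into it as $t\to-\infty$. This forces the inner branch of $\cM_s^{+}$ to wrap infinitely often around the origin, so that $\cC$ is no longer a graph near the origin and any horizontal line $\{q=q_0\}$ with $|q_0|$ small meets $\cC$ in infinitely many points, yielding many NE starting from $q_0$. For $|q_0|$ close to $1$ only the outer branch of $\cM_s^{+}$ is relevant, and the nullcline argument of (A) still applies there to give a single intersection, hence uniqueness. The main obstacle is the transition between the two behaviors: one must quantify how far from the origin the linearization remains faithful in order to identify an explicit threshold $q_*$ beyond which $\cC$ is still monotone. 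I expect this to be handled by exhibiting an explicit invariant wedge bounded by pieces of $S_a$ and $S_q$ on which the vector field has a fixed sign pattern, guaranteeing that the outer portion of $\cM_s^{+}$ cannot re-enter the spiraling region.
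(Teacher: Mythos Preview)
Your overall strategy matches the paper's, but there is a genuine gap in the step where you claim that the inner branch of $\cM_s^{+}$ ``has no option but to converge to the origin as $t\to-\infty$.'' Lemma \ref{lem:no_closed_orbits} says that in reverse time a \emph{bounded} solution converges to a stationary point \emph{or} hits $\{q=\pm1\}$; it does not assert boundedness, nor does it exclude the boundary. You have ruled out homoclinic and heteroclinic limits, but you have not ruled out (i) the inner branch becoming unbounded in $a$, or (ii) the inner branch exiting through $\{q=-1\}$ (or $\{q=+1\}$). Neither is automatic from the nullcline picture alone once the trajectory passes into $\{a<0\}$.

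The paper closes this gap by first analyzing the \emph{unstable} manifold $\cM_u$ of $(\overline a,\overline q)$ and showing that its left branch extends as a monotone graph all the way above $(-\overline a,-\overline q)$ (and symmetrically for the other saddle). These unstable manifolds then serve as barriers: the inner branch of $\cM_s^{+}$ cannot cross the unstable manifold emanating from $(-\overline a,-\overline q)$, which blocks access to $\{q=-1\}$; and if the inner branch were unbounded, symmetry would force it to intersect the inner branch of $\cM_s^{-}$, which is impossible. Only after these two obstructions are in place does the homoclinic/heteroclinic exclusion pin the limit to the origin. You should insert this barrier argument before invoking Lemma \ref{lem:no_closed_orbits}.

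For part (A), your claim that node dynamics at the origin plus the invariant-region argument yield monotonicity on $\{a\ge0\}$ is on the right track but incomplete: you have not shown that the inner branch stays in $\{a\ge0\}$ between the origin and $(\overline a,\overline q)$. The paper argues this by checking that linear analysis at \emph{both} endpoints forces the connecting orbit to lie locally in the region $\cQ_3$ below both nullclines, and then observes that the two local pieces can only connect if the whole orbit remains in $\cQ_3$, where both components of $f$ are positive. Part (B) and the outer-branch analysis are fine as you have them.
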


\begin{proof}
    We first establish the picture depicted in Figure \ref{fig:Proof_Discount_Super}. Let $\mathcal{M}_u$ denote the unstable manifold of the positive stationary point $(\overline{a},\overline{q})$. We claim that $\cM_u$ extends to the left as a monotone graph that lies above $(-\overline{a},-\overline{q})$. We continue by proving this claim. Let $\cQ_1$ be the region above both $S_a$ and $S_q$, let $\cQ_2$ be the lens enclosed by $S_a$ and $S_q$ in $\{a<0\}$, let $\cQ_3$ be the area below both $S_a$ and $S_q$, and finally let $\cQ_4$ be the lens enclosed by $S_a$ and $S_q$ in $\{a> 0\}$, as shown in Figure \ref{fig:Proof_Discount_Super}. Consider a trajectory starting at a point in $\mathcal{M}_u\cap\{a<\overline{a}\}$ which is close to $(\overline{a},\overline{q})$. Linear analysis shows that locally this trajectory lies in $\cQ_1$, and it stays above $S_q\cap\{a>0\}$ which can only be intersected horizontally. We claim that once it enters $S_q\cap \{a<0\}$, it stays in $\cQ_1$ by following a monotone graph that lies above $(-\overline{a},-\overline{q})$. Indeed, while being in $\{-\overline{a}<a<0\}$, it can only leave $\cQ_1$  by entering into $\cQ_2$. Suppose this happens. Then, it either stays bounded or unbounded. Suppose it stays bounded. As the origin is unstable and the stationary point $(-\overline{a}, - \overline{q})$ can be reached from $\cQ_1$, the trajectory would have to converge to  $(\overline{a}, \overline{q})$, contradicting Lemma \ref{lem:no_closed_orbits}. Hence it has to be unbounded by entering into $\mathcal{Q}_3$ through $\mathcal{Q}_2$. 

If this happens, by symmetry, we can construct another unbounded trajectory starting from $(-\overline{a}, -\overline{q})$ which enters into $\cQ_1$ from $\cQ_4$. Then, these two trajectories would intersect, leading to a contradiction. Thus the original trajectory does not enter into $\cQ_2$ and stays in $\cQ_1$, proving the claim and establishing Figure \ref{fig:Proof_Discount_Super} where the unstable manifolds are drawn by thick lines.
      
\begin{wrapfigure}[20]{r}{0.5\textwidth}
    \centering
    \vspace{-1em}
    \includegraphics[width=.8\linewidth]{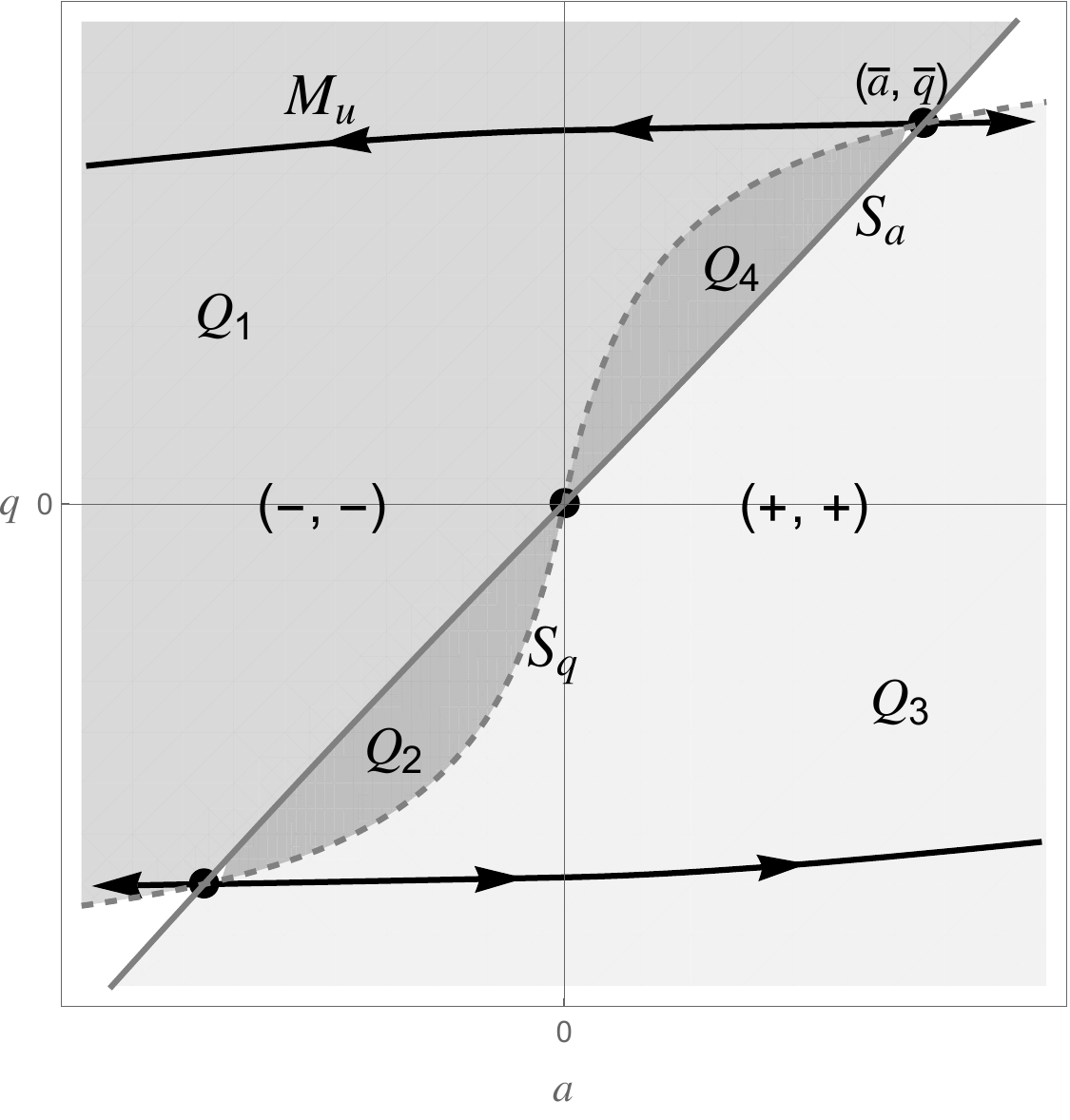}
    \caption{Supercritical discounted system (\ref{eq:HJ_control}, \ref{eq:HJ_controlI}).}
    \label{fig:Proof_Discount_Super}
\end{wrapfigure}

Next we construct $\cC$. We claim that the lower stable manifold $\cM_s\cap\{q<\overline{q}\}$ of $(\overline{a},\overline{q})$ extends to the origin. Indeed, if it is unbounded, by symmetry, we conclude that it would cross the upper stable manifold $\tilde{\cM}_s\cap\{q>-\overline{q}\}$ of $(-\overline{a},-\overline{q})$ yielding a contradiction. Hence, it must remain bounded. Moreover, it cannot intersect the unstable manifold emanating from $(-\overline{a},-\overline{q})$. This prevents it hitting the lower boundary $\{q =- 1\}$. Then, by Lemma \ref{lem:no_closed_orbits} in reverse time, it converges to any of the fixed points of the flow. We analyze all cases separately. As there are no homoclinic orbits, it cannot converge to $(\overline{a},\overline{q})$. If it converges to $(-\overline{a},-\overline{q})$ connecting two non-zero fixed points, we can construct a symmetric orbit connecting them in the opposite direction. The union of these heteroclinic orbits is also not possible by Lemma \ref{lem:no_closed_orbits}. Hence, it must converge to the origin.  

We have shown that $\cM_s$ joins the origin. By symmetry, this implies that the stable manifold $\tilde\cM_s$ of $(-\overline{a},-\overline{q})$ joins the origin as well. Moreover, a direct argument shows that the stable manifolds of the non-trivial equilibria extend to the boundary $\{q=\pm1\}$ as monotone curves. In summary, the union $\cC$ of the stable manifolds of the non-trivial fixed points together with the origin is a curve that extends from $\{q=1\}$ to $\{q=-1\}$ going through all three fixed points. Moreover, an application of Lemma \ref{lem:no_closed_orbits} shows that all bounded solutions lie on $\cC$. To construct a discounted NE starting from $q \in [-1,1]$, we choose $a^*(q)$ such that $(a^*(q),q) \in \cC$. Then, the $q$-component of the solution starting from this point is a discounted NE starting from $q$.
    
In case (A), we claim that $\cC$ constructed above is a monotone graph.  Indeed, linear analysis around the origin implies that $\cM_s$ connecting the origin to $(\overline{a},\overline{q})$ must enter into $\cQ_3$. Linear analysis around $(\overline{a},\overline{q})$ implies that $\cM_s$ enters into $\cQ_3$ in reverse time.  Hence, for these two parts to connect, all of $\cM_s\cap\{0<q<\bar q\}$ must lie in $\cQ_3$. Since the components of the vector field of the dynamical system are positive in $\cQ_3$, $\cC\cap\cQ_3$ is a monotone graph. By symmetry we conclude that $\cC$ is a monotone graph.  Further, the monotonicity of $\cC$ implies that $a^*(q)$ is unique. Since all NE starting from $q$ are given as the $q$-component of a bounded solution that lies in $\cC$, we conclude that there is a unique one for every $q$. 
    
In case (B), by the Hartman-Grobman theorem, \cite[Chapter 2.8]{perko_differential_1996}, the nonlinear dynamical system is topologically conjugate to the linearized system in a neighborhood of the origin. As the linear system spirals around the origin, so does $\cC$ of the nonlinear system. Therefore, for small $q$, there are many points $a$ so that $(a,q)\in \cC$, and for each one there is a discounted NE starting from $q$. When $|q|$ sufficiently large, $a^*(q)$ is unique and so is the NE starting from $q$. 
\end{proof}

\subsection{Analysis as a Potential Game}
\label{ss:potential}
When the running cost of a MFG is the linear derivative of a function,
the game is called potential, and
the system of equations characterizing these games
are the first-order conditions of an associated mean-field optimal control (MFC)
problem \cite{briani2018stable,FS,lasry_mean_2007}.
Indeed, all Kuramoto mean-field games discussed in this paper are potential, and
in the special case of finite-state MFGs, this is a direct consequence of the necessary part 
of Pontryagin's maximum principle, see for example \cite{gomes_socio-economic_2014,gomes_continuous_2013}  
for accounts of this fact. 

We first derive the running cost of the associated MFC using the theory developed in \cite{FS}.  
For a given $p \in [0,1]$ introduce a probability measure $\mu$ on $\cX=\{0,1\}$ by
$\mu(0)=(1-p)$ and $\mu(1)=p$.
Then, for $x \in \cX$,
$$
\ell(x,p)= p \chi_{\{x=0\}} +(1-p)\chi_{\{x=1\}} = \int_\cX|x-y| \ \mu(\d y)
= \partial_\mu \vartheta(\mu)(x),
$$
where $\partial_\mu\vartheta(\mu)$ is the linear derivative of $\vartheta$ 
as in \cite{FS}, and 
$$
\vartheta(\mu) =\frac12 \int_\cX \int_\cX
|x-y| \, \mu(\d y) \, \mu(\d x)
= p(1-p) = \frac14 (1-q^2).
$$
Hence, the two-state model that we propose
is in fact potential as we have already claimed.

We continue by defining the MFC.
As we have seen in the MFG setting, 
it is no loss of generality to restrict the feedback controls $\alpha(t,x)$
to be given as $\alpha(t,0)=\alpha^+(t)$ and $\alpha(t,1)=\alpha^-(t)$
for some measurable function $\alpha:[0,\infty) \mapsto \R$.
Let $X^\alpha_t$ be the process corresponding to this feedback control, and set 
$p^\alpha(t):= \P(X^\alpha_t=1)$, $q^\alpha(t):=2p^\alpha(t)-1$ as before. 
Then, in  terms of  $(\alpha, q^\alpha)$,
the running cost is  given by,
\begin{align*}
\frac12 \ \E[\alpha(t,X^\alpha_t)^2] + \frac{\kappa}{4} (1-(q^\alpha(t))^2) 
&= \frac14 \ [\alpha(t)^2 - \mathrm{sign}(\alpha(t))\alpha(t)^2q(t) + \kappa  (1-(q^\alpha(t))^2)].
\end{align*}
In view of \cite{FS}, the MFC problem is 
\begin{equation}
\tag{MFC$_\beta$}
\label{eq:MFC_disc}
\inf_{\alpha \in \cA}\quad  \int_0^\infty e^{-\beta t} \left(\alpha(t)^2 - 
\mathrm{sign}(\alpha(t))\alpha(t)^2q^\alpha(t) - \kappa q^\alpha(t)^2\right)\, \d t,
\end{equation}
where the controlled state process $q^\alpha(\cdot)$ is 
the solution of \eqref{eq:HJ_controlI} with $a$ replaced by $\alpha$ 
and initial condition $q(0)=q$. In view of \cite{briani2018stable,FS}
if $\alpha^*$ is an optimal control, 
the optimally controlled path $q^*(\cdot)$ is a discounted NE starting from $q$. 
Then, the minimizers of the above problem are one of the discounted NE characterized in the previous section.  
We leverage this connection to obtain several results about the minimizers. 
The following is an immediate corollary of Proposition \ref{pro:sub} and Theorem \ref{thm:super}.

\begin{Cor}
\label{cor:unique_min} Suppose that $\kappa < \kappa_c + \beta^2/4$. Then, for every $q \in [-1,1]$
there is a unique minimizer of $(\mathrm{MFC}_{\beta})$, and the optimally controlled state converges to the uniform distribution.
\end{Cor}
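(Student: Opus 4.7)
The plan is to combine the MFC--MFG correspondence derived in Section \ref{ss:potential} with the uniqueness of discounted Nash equilibria established in Proposition \ref{pro:sub} and Theorem \ref{thm:super}(A). The key observation from \cite{briani2018stable,FS} is that, because the two-state Kuramoto model is a potential game, any minimizer $\alpha^\ast$ of $(\mathrm{MFC}_\beta)$ produces an optimally controlled trajectory $q^\ast(\cdot)$ that is itself a discounted NE starting from $q$. Consequently, counting MFC minimizers reduces to counting discounted NE starting from the same initial condition.

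Under the hypothesis $\kappa < \kappa_c + \beta^2/4$, the two sub-regimes $\kappa < \kappa_c$ and $\kappa_c < \kappa < \kappa_c + \beta^2/4$ fall respectively under Proposition \ref{pro:sub} and Theorem \ref{thm:super}(A). Both give, for every $q \in [-1,1]$, a unique discounted NE starting from $q$, since the stable-manifold graph $\mathfrak n$ in the subcritical case and the monotone connecting curve $\cC$ in case (A) each contain exactly one point with second coordinate $q$. Hence the MFC minimizer, if it exists, is unique. Existence would be verified by plugging the bounded NE candidate into the discounted cost functional and using the value function from Proposition \ref{pr:infinite} together with Dynkin's formula to show that this candidate attains the infimum. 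The long-time behavior of the optimally controlled state is then inherited directly from the convergence statement for the unique NE already produced in the ODE analysis.

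The main obstacle I anticipate is not the uniqueness step, which is immediate from the preceding phase-plane analysis, but the verification that the unique NE candidate actually minimizes the MFC cost globally rather than merely being a critical point. This is handled by the potential-game variational argument, which itself rests on the one-to-one correspondence between bounded solutions of (\ref{eq:Kol},\ref{eq:HJ_control}) and discounted NE established in Proposition \ref{pr:infinite}: once every bounded MFG-system trajectory starting from $q$ is known to be an NE and there is only one such trajectory, there is no room for a competing minimizer to exist.
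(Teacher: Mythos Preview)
Your reduction is exactly the paper's: the corollary is stated there as an immediate consequence of Proposition~\ref{pro:sub} and Theorem~\ref{thm:super}, via the potential-game fact that every $(\mathrm{MFC}_\beta)$ minimizer yields a discounted NE starting from the given $q$. Uniqueness of the NE in both subregimes then forces uniqueness of the minimizer, and the control is recovered from the state trajectory by Lemma~\ref{lem:equal}.

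There is, however, a genuine gap in the convergence step. In the supercritical window $\kappa_c<\kappa<\kappa_c+\beta^2/4$, Theorem~\ref{thm:super}(A) says the unique NE starting from $q\neq 0$ converges to $\mathrm{sign}(q)\,\overline{q}$, a self-organizing SNE, \emph{not} to the uniform distribution. Your phrase ``inherited directly from the convergence statement for the unique NE'' would therefore deliver the wrong limit in this subregime, and you offer no additional argument to recover convergence to the uniform. (This tension is present in the paper's own statement; the convergence-to-uniform conclusion is supported by the cited results only in the subcritical range $\kappa<\kappa_c$.) Separately, your proposed existence check---applying Dynkin's formula with the MFG value function from Proposition~\ref{pr:infinite}---verifies optimality for the representative-agent cost $J_\beta$, not for the MFC functional, which has a different integrand. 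Existence of an $(\mathrm{MFC}_\beta)$ minimizer needs a direct argument (e.g.\ compactness and lower semicontinuity of the cost over controls), after which the potential structure together with NE uniqueness identifies it.
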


\begin{Prop}
    For sufficiently large $\kappa$, 
    the uniform distribution is not a minimizer and starting from any initial condition  $q \in [-1,1]$,
    the optimally controlled state $q^*(\cdot)$ converges to one of the self-organizing SNE.
\end{Prop}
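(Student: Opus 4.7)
The plan is to combine the phase-space structure of the dynamical system in the deeply supercritical regime with an explicit cost comparison at $q=0$. I first recall from Subsection \ref{ss:potential} that any minimizer of $(\mathrm{MFC}_\beta)$ is a discounted Nash equilibrium, so by Theorem \ref{thm:super} the optimally controlled state $q^*(\cdot)$ lies on the curve $\cC$ and, being a bounded planar orbit, converges by Lemma \ref{lem:no_closed_orbits} to one of the fixed points $\{-\bar q,\,0,\,\bar q\}$.

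For $\kappa > \kappa_c + \beta^2/4$, which holds for all sufficiently large $\kappa$, the linearization around the origin from Subsection \ref{ss:linear} has two eigenvalues with positive real part, so by Hartman--Grobman no non-constant bounded trajectory of (\ref{eq:HJ_control}, \ref{eq:HJ_controlI}) converges to the origin. Consequently, the only NE that converges to the uniform distribution is the constant trajectory $q\equiv 0$, admissible only from $q(0)=0$. This settles every initial condition $q(0)\neq 0$: the corresponding minimizer is necessarily a non-constant NE, and hence converges to $\pm\bar q$.

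For $q(0)=0$ I rule out the constant uniform trajectory by exhibiting a competitor with strictly negative cost. Take the constant feedback $\alpha(t)\equiv \bar a$; then $q(t)=\bar q(1 - e^{-\gamma t})$ with $\gamma := 2\sigma^2 + \bar a$, and a direct integration yields
\[
J_\beta(\alpha) \;=\; \frac{c^*}{\beta} \;+\; \frac{\bar a^2 \bar q + 2\kappa \bar q^2}{\beta+\gamma} \;-\; \frac{\kappa \bar q^2}{\beta+2\gamma},
\]
where the stationary cost rate $c^* := \bar a^2(1-\bar q) - \kappa \bar q^2$ simplifies, via the nullcline identity $\kappa \bar q = (\beta+2\sigma^2)\bar a + \bar a^2/2$, to $c^* = -\bar a^2(\beta + \bar a/2)/(\bar a+2\sigma^2)<0$. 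Since by \eqref{eq:abar} $\bar a \sim \sqrt{2\kappa}$ and $\bar q \to 1$ as $\kappa\to\infty$, one has $c^*/\beta \sim -\kappa/\beta$, while the two transient terms are both only $O(\sqrt{\kappa})$. Hence $J_\beta(\alpha)<0$ for all sufficiently large $\kappa$, the uniform with cost $0$ is not optimal, and the minimizer is a non-constant NE converging to $\pm\bar q$ by the previous paragraph.

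The main obstacle is the asymptotic estimate $J_\beta(\alpha)\to -\infty$. The decisive cancellation is that $\bar q \to 1$, so $-\kappa\bar q^2 \sim -\kappa$ while $\bar a^2(1-\bar q) = 2\sigma^2\bar a^2/(\bar a+2\sigma^2) = O(\sqrt{\kappa})$; the transient contributions inherit an extra factor $1/\gamma = O(1/\sqrt{\kappa})$ which keeps them from overwhelming the dominant $-\kappa/\beta$ steady-state term.
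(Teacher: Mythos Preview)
Your proof is correct and follows essentially the same strategy as the paper: use the supercritical phase-space structure of Theorem~\ref{thm:super} to reduce to the initial condition $q(0)=0$, then exhibit a constant-control competitor with strictly negative cost. The paper's execution differs only in that it takes a generic small constant $a>0$ and extracts an explicit threshold $\widetilde{\kappa}=\kappa_c+\beta^2/2+\beta\sigma^2$ above which the cost is negative for all sufficiently small $a$, whereas you fix $\alpha\equiv\bar a$ and argue asymptotically as $\kappa\to\infty$; your explicit Hartman--Grobman justification that no non-constant bounded orbit converges to the origin is a useful clarification that the paper leaves implicit in the sentence ``We only have to consider the case of a uniform initial condition.''
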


\begin{proof}
We only have to consider the case of a uniform initial condition, $q(0)=0$. Then, we simply take a 
constant control $\alpha\equiv a>0$ and compute the discounted cost explicitly. 
A straightforward computation shows that we can achieve a strictly negative cost if 
\begin{equation*}
    \kappa>\kappa_c+\frac{\beta^2}{2} +\beta\sigma^2=:\widetilde{\kappa}
\end{equation*}
and $a<(\kappa-\widetilde{\kappa})/(\beta+2\sigma^2)$. This proves that the zero control cannot be optimal 
for sufficiently large $\kappa$, and the minimizer must follow one of the Nash equilibria that converge to the self-organizing SNE.
\end{proof}

\section{Analysis of the Ergodic Problem}
\label{sec:ergodic}

The critical interaction parameter is $\kappa_c=4\sigma^4$, and the dynamical  system (\ref{eq:HJ_control}, \ref{eq:HJ_controlI}) reduces to
\begin{align} 
    \Dot a(t) &= 2\sigma^2 a(t) + \frac{1}{2}\mathrm{sign}(a(t))a(t)^2 - \kappa q(t),
    \label{eq:HJ_control-erg}\\
    \Dot q(t) &= a(t) - (2\sigma^2 + |a(t)|)q(t).
    \label{eq:HJ_controlI-erg}
\end{align}
This system is conservative, and admits the following first integral which remains constant along any solution to (\ref{eq:HJ_control-erg}, \ref{eq:HJ_controlI-erg}),
\begin{equation*}
    E(a,q) = \kappa\frac{q^2}{2} + \frac{a^2}{2} - 2\sigma^2aq - \frac{a^2}{2} \mathrm{sign}(a)q.
\end{equation*}

\begin{Prop}[Ergodic subcritical regime] \label{thm:erg_sub}
    If $\kappa<\kappa_c$, then the uniform SNE is the unique ergodic Nash equilibrium. 
\end{Prop}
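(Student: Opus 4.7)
The plan is to reduce the claim to planar phase-space analysis via Proposition \ref{pr:ergodic} and then rule out non-trivial periodic orbits by a Poincar\'e index argument. Since the uniform distribution $q \equiv 0$ is a stationary SNE and therefore trivially an ergodic NE, only the uniqueness direction requires work. By Proposition \ref{pr:ergodic} specialized to $\beta = 0$, any other ergodic NE would correspond to a continuous, non-constant periodic solution of the planar system (\ref{eq:HJ_control-erg}, \ref{eq:HJ_controlI-erg}) in the forward-invariant strip $D = \R \times [-1,1]$. In the subcritical regime $\kappa < \kappa_c = 4\sigma^4$, Section \ref{sec:Stationary_Equilibria} already identifies the origin as the unique fixed point of the flow.

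Next I would linearize the vector field $f$ at the origin, obtaining
\begin{equation*}
Df(0,0) = \begin{pmatrix} 2\sigma^2 & -\kappa \\ 1 & -2\sigma^2 \end{pmatrix},
\end{equation*}
whose trace vanishes and whose determinant is $\kappa - \kappa_c < 0$. The two eigenvalues are therefore real with opposite signs, so the origin is a hyperbolic saddle of Poincar\'e index $-1$.

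The decisive step is the planar index obstruction. For a continuous vector field on $\R^2$ with isolated zeros, the sum of indices of the fixed points enclosed by any periodic orbit must equal $+1$. Since the origin is the only fixed point of the system and contributes index $-1$, no periodic orbit can exist in $D$, whether surrounding the origin (total index $-1$) or surrounding no fixed point (total index $0$). Together with Proposition \ref{pr:ergodic}, this forces the unique ergodic NE to be $q \equiv 0$.

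The only technical subtlety lies in the regularity of $f$ across the line $\{a = 0\}$: the nonlinearity $\mathrm{sign}(a) a^2 = a|a|$ is globally $C^1$, but the $|a|\,q$ term in the second coordinate of $f$ is merely Lipschitz. This is harmless for the index theorem, which only requires continuity and isolated zeros; alternatively, one can approximate $f$ by smooth vector fields that vanish only at the origin, for instance by replacing $|a|$ with $\sqrt{a^2 + \varepsilon^2}$, and pass to the limit without changing the index. The argument is robust under this approximation and completes the proof.
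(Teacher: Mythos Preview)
Your argument is correct but follows a genuinely different route from the paper's. Both proofs reduce the question to ruling out non-constant periodic orbits of (\ref{eq:HJ_control-erg}, \ref{eq:HJ_controlI-erg}) and both observe that the origin is the unique fixed point and a saddle when $\kappa<\kappa_c$. From there, however, the arguments diverge: you invoke the Poincar\'e index theorem (a periodic orbit must enclose fixed points of total index $+1$, whereas the lone saddle has index $-1$), while the paper exploits the first integral $E(a,q)$ to write down the stable manifold of the origin \emph{explicitly} as the graph of a strictly increasing function joining $\{q=-1\}$ to $\{q=1\}$, and then argues that any closed orbit around the origin would have to cross this invariant curve, contradicting uniqueness of trajectories.

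What each buys: your index argument is shorter and entirely topological---it does not use the conserved quantity $E$ at all, and would go through verbatim for any perturbation of the system preserving the saddle nature of the unique equilibrium. The paper's approach, by contrast, makes essential use of the conservative structure, yielding a concrete formula for the stable manifold; this is more hands-on and foreshadows the supercritical analysis, where the level sets of $E$ again organize the phase portrait. Your remark on regularity is well taken: the index theorem needs only continuity and an isolated zero, and since $\det Df(0,0)=\kappa-\kappa_c<0$ the index is indeed $-1$, so the Lipschitz-only smoothness across $\{a=0\}$ is no obstacle.
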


\begin{proof}
We already know that the origin is an ergodic NE, and the only stationary one. By Proposition \ref{pr:ergodic}, any other ergodic NE is the $q$-component of a periodic trajectory or equivalently a closed orbit. Moreover, linear stability analysis shows that the origin is a saddle point, and any solution $(a,q)$ that goes through the origin satisfies $E(a,q)=E(0,0)=0$.  Hence,  the stable manifold of the origin is given as the graph of the following function,
\begin{equation*}
    a\mapsto\frac{1}{2 \kappa } \left(\mathrm{sign}(a) a^2 +4 \sigma ^2 a + a \sqrt{a^2+8 \sigma ^2 | a| + 4(\kappa_c-\kappa)} \right).
\end{equation*}
This is a strictly increasing function in phase space  that crosses the origin and hits the boundary $\{q=\pm1\}$. Note that the stable manifold of the origin in both the subcritical ergodic system and the subcritical discounted system is represented as a monotone curve in the phase space, see Figure \ref{fig:Proof_Discount_Sub}.

If there is a closed orbit, it must enclose the origin since it is the only stationary point of this system. Then, this closed orbit  would have to cross the stable manifold, creating a contradiction.
\end{proof}
\vspace{5pt}

We next analyze the supercritical regime. In this case, a direct calculation implies that the origin is a strict local minimum of the energy $E$. Since the origin is an isolated stationary point of (\ref{eq:HJ_control-erg}, \ref{eq:HJ_controlI-erg}), all trajectories sufficiently close to the origin are closed. Then, the origin becomes a nonlinear center, and we have the following rather unexpected result.

\begin{Prop}[Ergodic supercritical regime]
\label{thm:erg_sup}
If  $\kappa>\kappa_c$, then there exist infinitely many time-periodic NE rotating around the uniform.
\end{Prop}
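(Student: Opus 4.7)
The plan is to show that, in the supercritical regime, the origin is a nonlinear center of the ergodic system $(\ref{eq:HJ_control-erg},\ref{eq:HJ_controlI-erg})$ surrounded by a continuum of closed orbits, and then to identify each such orbit with a periodic ergodic Nash equilibrium via Proposition~\ref{pr:ergodic}.

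First, I would verify by a direct differentiation (carried out separately in $\{a>0\}$ and $\{a<0\}$ and matched at $\{a=0\}$ using continuity of $E$ and of $\dot E$) that $E$ is indeed conserved along solutions of $(\ref{eq:HJ_control-erg},\ref{eq:HJ_controlI-erg})$. Second, I would show that $(0,0)$ is a strict local minimum of $E$. The quadratic part of $E$ at the origin has Hessian matrix
\[
\begin{pmatrix} 1 & -2\sigma^2 \\ -2\sigma^2 & \kappa \end{pmatrix},
\]
whose determinant equals $\kappa-\kappa_c>0$ and whose trace $1+\kappa$ is positive, so the form is positive definite. The remaining term $-\tfrac{1}{2}\mathrm{sign}(a)a^2 q$ in $E$ is continuous and of cubic order, hence dominated near the origin. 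Consequently $E(a,q)>0$ on some punctured neighborhood of $(0,0)$, giving the strict local minimum.

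Third, for each sufficiently small $c>0$, the connected component $\Gamma_c$ of the level set $\{E=c\}$ lying close to the origin is a simple closed curve encircling $(0,0)$ and contained in the strip $D$. Conservation of $E$ makes $\Gamma_c$ invariant under the flow, and since $\Gamma_c$ contains no equilibrium of $(\ref{eq:HJ_control-erg},\ref{eq:HJ_controlI-erg})$, the vector field is nonvanishing and tangent to $\Gamma_c$; hence any trajectory on $\Gamma_c$ traces out the entire compact curve in finite time and is periodic. The resulting continuous periodic $(a(\cdot),q(\cdot))$ satisfies the hypotheses of Proposition~\ref{pr:ergodic}(\ref{iii}) with $\beta=0$, so the flow $p(\cdot)=(q(\cdot)+1)/2$ is a periodic ergodic Nash equilibrium. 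Varying $c$ in a small interval yields uncountably many distinct such equilibria rotating around the uniform SNE.

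The main obstacle is the second step: the $\mathrm{sign}(a)$ factor prevents $E$ from being $C^2$ across $\{a=0\}$, so the strict local minimum cannot be deduced from a naive Hessian test alone. This is handled by observing that the offending term is quantitatively of size $|a|^2|q|$, hence genuinely cubic and therefore dominated by the positive-definite quadratic part on any sufficiently small neighborhood of the origin; the Morse-type argument goes through, and the argument of the third step (closed curves $\Rightarrow$ closed orbits) follows from conservation alone and does not require further regularity.
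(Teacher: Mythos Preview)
Your proposal is correct and follows essentially the same route as the paper, which argues in the paragraph preceding the proposition that the origin is a strict local minimum of the conserved quantity $E$ and hence a nonlinear center. You have supplied the details the paper leaves implicit: the Hessian computation showing the quadratic part of $E$ is positive definite precisely when $\kappa>\kappa_c=4\sigma^4$, the observation that the non-smooth remainder $-\tfrac12\mathrm{sign}(a)a^2 q$ is genuinely cubic and therefore negligible near the origin, and the explicit invocation of Proposition~\ref{pr:ergodic}(\ref{iii}) to pass from closed orbits of $(\ref{eq:HJ_control-erg},\ref{eq:HJ_controlI-erg})$ to periodic ergodic Nash equilibria.
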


\subsection{Analysis as a Potential Game}
\label{ss:ergodic_potential}

This section studies Nash equilibria that arise as the first-order conditions of the following ergodic control problem, 
\begin{equation}
    \label{eq:MFC_erg}
    \tag{MFC$_0$}
    \inf_{\alpha \in \cA}\quad \underset{T\uparrow\infty}{\overline{\lim}}\ \frac1T \ \int_0^T \left(\alpha(t)^2 - \mathrm{sign}(\alpha(t))\alpha(t)^2q^\alpha(t)- \kappa q^\alpha(t)^2\right)\,\d t,
\end{equation}
where $q^\alpha(\cdot)$ is the solution of \eqref{eq:HJ_controlI} with $a=\alpha$ and any initial data. Notice that since $\sigma>0$, changes in the initial condition do not affect the ergodic cost. Moreover, there are minimizers of the above control problem. As explained in Section \ref{ss:potential}, any optimally controlled path $q^*(\cdot)$ of \eqref{eq:MFC_erg} that is periodic is an ergodic NE. The next proposition shows that the control $\alpha\equiv0$ is optimal in the subcritical regime while it becomes strictly suboptimal in the supercritical regime. This implies that either the periodic NE or one of the non-uniform stationary NE are chosen as the minimizers.
\vspace{5pt}

\begin{Thm}[Ergodic mean-field control problem] 
Fix any initial condition. If $\kappa<\kappa_c$, the zero control is optimal and the associated state converges to the uniform distribution. In the supercritical regime $\kappa>\kappa_c$, however, the zero control is not optimal and the state does not converge to the uniform distribution.
\end{Thm}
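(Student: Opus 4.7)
The plan is to treat the two regimes independently: for the subcritical case I will construct a quadratic Lyapunov function that dominates the cost integrand and forces the ergodic cost to be nonnegative; for the supercritical case I will exhibit an explicit constant control achieving a strictly negative ergodic cost, and then rule out convergence of any optimal state to the uniform distribution by a contradiction argument.

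\medskip

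\textbf{Subcritical regime.} With $\alpha \equiv 0$ the state equation reduces to $\dot q = -2\sigma^2 q$, so $q(t) = q(0)e^{-2\sigma^2 t}$ converges exponentially to $0$ and the ergodic cost of the zero control equals $0$. To prove this is optimal I will use $V(q) = -2\sigma^2 q^2$, bounded on $[-1,1]$. Writing $L(\alpha,q) := \alpha^2(1 - \mathrm{sign}(\alpha)q) - \kappa q^2$ for the integrand and using $\dot q = \alpha(1-\mathrm{sign}(\alpha)q) - 2\sigma^2 q$, a short algebraic rearrangement yields the identity
\begin{equation*}
L(\alpha,q) + V'(q)\dot q = (1 - \mathrm{sign}(\alpha)q)(\alpha - 2\sigma^2 q)^2 + q^2\bigl[4\sigma^4(1 + \mathrm{sign}(\alpha)q) - \kappa\bigr].
\end{equation*}
I will then split cases on the sign of $\mathrm{sign}(\alpha)q$. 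If $\mathrm{sign}(\alpha)q \geq 0$ the bracket is at least $\kappa_c - \kappa > 0$ and the first term is nonnegative. If $\mathrm{sign}(\alpha)q < 0$, then $\alpha q \leq 0$ forces $(\alpha - 2\sigma^2 q)^2 \geq 4\sigma^4 q^2$, which absorbs the (now possibly negative) bracket. In both cases $L + V'(q)\dot q \geq (\kappa_c - \kappa)q^2 \geq 0$. Integrating on $[0,T]$, dividing by $T$, and using boundedness of $V$ will give $J_0(\alpha) \geq 0 = J_0(0)$ for every admissible $\alpha$.

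\medskip

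\textbf{Supercritical regime.} For $\kappa > \kappa_c$ I will test the constant control $\alpha \equiv a$ with $a > 0$. Then $\dot q = a - (2\sigma^2 + a)q$ has equilibrium $q^* = a/(2\sigma^2+a) \in (0,1)$ and $q(t) \to q^*$ exponentially, so the ergodic cost reduces to
\begin{equation*}
a^2(1-q^*) - \kappa(q^*)^2 = \frac{a^2\bigl[2\sigma^2(2\sigma^2+a) - \kappa\bigr]}{(2\sigma^2+a)^2},
\end{equation*}
strictly negative whenever $a > (\kappa-\kappa_c)/(2\sigma^2)$. Since $J_0(0) = 0$, this shows the zero control is not optimal. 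To exclude convergence of the optimally controlled state $q^\ast(\cdot)$ to the uniform distribution, I will argue by contradiction: if $q^\ast(t) \to 0$, then for every $\epsilon > 0$ eventually $|q^\ast(t)| < \epsilon$, and since $\alpha^2(1-\mathrm{sign}(\alpha)q) \geq 0$ the integrand is eventually bounded below by $-\kappa\epsilon^2$, giving $J_0(\alpha^\ast) \geq -\kappa \epsilon^2$ for every $\epsilon > 0$ and hence $J_0(\alpha^\ast) \geq 0$, contradicting the strictly negative upper bound.

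\medskip

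The main obstacle is identifying the Lyapunov function and handling the awkward sign configuration. The quadratic coefficient $A = 4\sigma^2$ in $V(q) = -A q^2/2$ is forced by maximizing $2A\sigma^2 - A^2/4$, whose maximum value is precisely $\kappa_c = 4\sigma^4$; this is the algebraic origin of the critical threshold. The delicate step in the Lyapunov identity is the ``wrong sign'' case, where the $q^2$-bracket $4\sigma^4(1-|q|) - \kappa$ can become negative near $|q|=1$, and one must invoke the inequality $(\alpha-2\sigma^2 q)^2 \geq 4\sigma^4 q^2$ valid whenever $\alpha q \leq 0$ to recover nonnegativity. Everything else is routine calculation.
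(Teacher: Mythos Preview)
Your subcritical argument is correct and takes a genuinely different route from the paper. The paper does not construct a Lyapunov function; instead it invokes the potential-game structure: any minimizer of $(\mathrm{MFC}_0)$ yields an ergodic Nash equilibrium, and Proposition~7.1 shows that in the subcritical regime the only bounded trajectory of the characteristic system $(\ref{eq:HJ_control-erg},\ref{eq:HJ_controlI-erg})$ is the one converging to the origin, forcing the optimal ergodic cost to be zero. Your approach is more elementary and self-contained: the identity $L(\alpha,q)+V'(q)\dot q \ge (\kappa_c-\kappa)q^2$ gives $J_0(\alpha)\ge 0$ for \emph{every} admissible $\alpha$ directly, without appealing to the Nash characterization or the phase-plane analysis. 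I checked the identity and both sign cases; they are correct. What the paper's route buys is that it falls out of machinery already built; what yours buys is independence from that machinery and a transparent explanation of why $\kappa_c=4\sigma^4$ is the threshold (the optimal Lyapunov coefficient).

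In the supercritical part your strategy coincides with the paper's (test a constant control), but you have the inequality reversed: the numerator $2\sigma^2(2\sigma^2+a)-\kappa = \kappa_c + 2\sigma^2 a - \kappa$ is negative precisely when $a < (\kappa-\kappa_c)/(2\sigma^2)$, not when $a$ exceeds this value. This is a slip, not a gap: any small $a>0$ works, and the paper's choice $a=\bar a$ also satisfies the correct inequality. Your final contradiction argument showing that no optimal state can converge to the uniform distribution is correct and in fact more explicit than the paper, which asserts this conclusion but does not spell out the $-\kappa\epsilon^2$ lower bound.
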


\begin{proof}   
If $\kappa<\kappa_c$, we have shown in the proof of Proposition \ref{thm:erg_sub} that any bounded solution $(a,q)$ to (\ref{eq:HJ_control-erg}, \ref{eq:HJ_controlI-erg}) converges to the origin. Hence the cost of the ergodic control problem \eqref{eq:MFC_erg} is zero, and we can achieve this cost by taking the zero control. 

Now let $\kappa>\kappa_c$. Let $\overline{a}>0$ denote the stationary control corresponding to the positive supercritical SNE, given by \eqref{eq:abar}. We take the constant control $\alpha\equiv \overline{a}>0$ and directly compute that 
\begin{equation*}
    \overline{a}^2 - \overline{a}^2q- \kappa \overline{q}^2<0 \quad \Longleftrightarrow \quad \kappa>\kappa_c.
\end{equation*} 
This implies that we can achieve a strictly negative ergodic cost if $\kappa>\kappa_c$, proving the claim.
\end{proof}

\subsection{Convergence of the Finite Horizon Game}
\label{ss.finite}

\begin{wrapfigure}[20]{r}{0.49\textwidth}
    \centering
    \includegraphics[width=.7\linewidth]{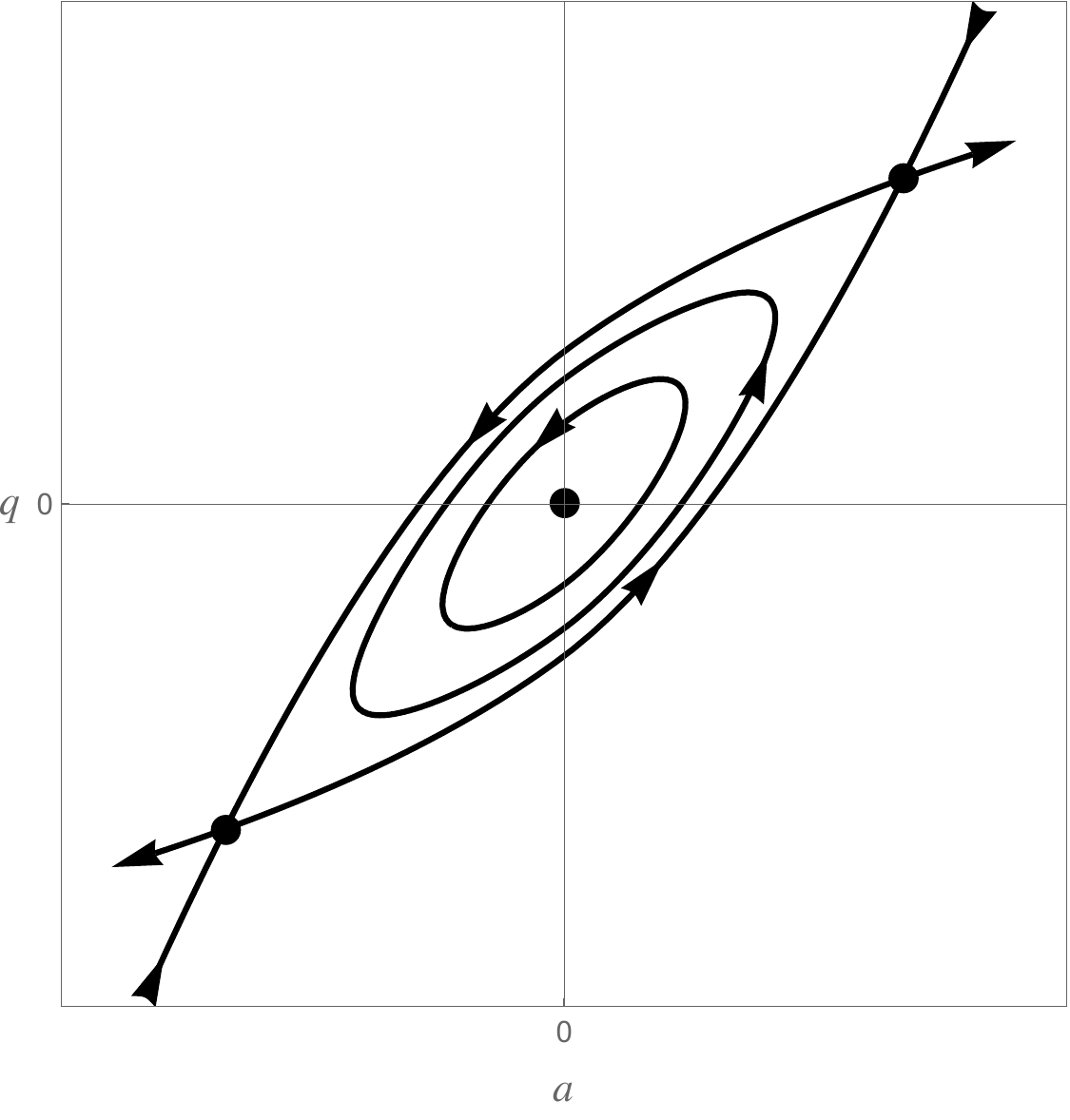}
    \caption{Stable and unstable manifolds of the non-trivial equilibria of (\ref{eq:HJ_control-erg}, \ref{eq:HJ_controlI-erg}), and periodic orbits in the supercritical regime $\kappa >\kappa_c$.}
    \label{fig:erg_turnpike}
\end{wrapfigure}

Fix $T>0$ and for a given flow of probabilities $(p(t))_{t\in[0,T]}$ consider the finite horizon problem of the representative agent:
\begin{equation*}
    v^T(t,x) := \inf_{\alpha\in\mathcal{A}}\  \E \int_t^T\left(\frac12 \alpha(u,X^\alpha_u)^2 + \kappa \ell(X^{\alpha}_u,p(u))\right) \d u,
\end{equation*}
where $X^\alpha_t=x$. Then, the finite horizon Nash equilibria are defined analogously, and following the proof of Proposition \ref{pr:ergodic}, we conclude that finite horizon Nash equilibria are in one-to-one correspondence with bounded trajectories $(a^T,q^T)$ of the conservative system (\ref{eq:HJ_control-erg}, \ref{eq:HJ_controlI-erg}), together with an initial condition for $q^T(0)$ and the terminal condition $a^T(T)=0$. 

In the supercritical case, a direct analysis of the energy $E$ reveals that the stable and unstable manifolds of the non-trivial equilibria join each other, creating a lens as depicted in Figure \ref{fig:erg_turnpike}. Inside this lens, any orbit is periodic. Then, one observes that as the periodic orbits get closer to the boundary of this lens, they spend most of their time near the two self-organizing SNE. This can be seen by comparing trajectories that start on the stable manifolds of the non-trivial SNE (i.e., on the boundary of the lens) to periodic trajectories with close initial data. The claim then follows by continuous dependence of solutions on the initial data and the fact that solutions on the stable manifold approach the stationary points exponentially slowly. Further, as periodic orbits inside the lens approach the boundary of the lens, their period goes to infinity. Since finite horizon NE are characterized by solutions $(a^T,q^T)$ to the same ODE system, but with the terminal condition $a^T(T)=0$, we therefore have the following \emph{turnpike property}: For any self-organizing SNE and any initial distribution there exists a sequence of finite horizon NE $q^T(\cdot)$, $T\in\N$, that spend an arbitrarily large amount of time at either self-organizing SNE as $T\uparrow\infty$. This property is somehow in analogy with the local stability result proved in \cite[Theorem 3.1]{cesaroni_stationary_2023} for the original Kuramoto mean-field game in the large coupling constant regime. However, for initial conditions in $(-\overline{q},\overline{q})$, there are many other sequences of finite horizon Nash equilibria that rotate and stay bounded away from any SNE as $T\uparrow\infty$ as well. 

\section{Extensions and Conclusion}
\label{sec:eg}

We continue to discuss possible extensions of our model. To incorporate asymmetric settings in which the population naturally favors a particular state, one might consider different values for the thermal noise $\sigma^2$. In this case, the controlled rates are given by,
$$
\lambda^\alpha_{0,1}(t) = \sigma_0^2 + \alpha(t,0),\quad \lambda^\alpha_{1,0}(t) = \sigma_1^2 + \alpha(t,1),
$$
for different values of $\sigma_0^2$ and $\sigma_1^2$. A straightforward modification of our analysis shows that, while NE are not symmetric around the uniform distribution anymore, these models share the same qualitative structure concerning uniqueness and stability of the NE.

A natural extension of our model are problems in which agents are allowed to choose from more than two states as discussed in the Introduction. If $N>2$ denotes the number of states, an immediate extension of Proposition \ref{pr:infinite} shows that Nash equilibria are characterized by solutions to a $2N$-dimensional coupled forward-backward ODE system, see also \cite[Section 2.3]{cohen_analysis_2023} for details. 
As in the Introduction, let $\cX_N=\{x_1,..,x_N\}$. Then, for $x_i\in\cX_N$ and $v\in\R^N$ define a Hamiltonian,
\begin{align*}
H(x_i,v,p) &:= \inf_{a_\ell,a_r\geq0} 
\left\{(N^2\sigma^2 + Na_\ell)(v_{i-1}-v_i) 
+ \frac12 a_\ell^2 
+ (N^2\sigma^2 + Na_r)(v_{i+1}-v_i) 
+ \frac12 a_r^2 \right\} + \kappa \ell( x_i, p) \\
&= \frac{N^2}{2} \left(2 \sigma^2 (v_{i-1}-2v_i+v_{i+1})
 - [(v_{i-1}-v_i)^-]^2 - [(v_i-v_{i+1})^+]^2\right) + \kappa \ell( x_i, p),
\end{align*}
where $i-1$, $i+1$ are to be understood modulo $N$, and the running cost $\ell$ is defined as in \eqref{eq:Kuramoto_cost}.

The discounted Nash equilibria are described by 
the solutions of the following forward-backward system, 
\begin{align*}
    \partial_t v(t,x_i) &= \beta v(t,x_i) - H(x_i,v(t),p(t)) \\
    \partial_t p(t,x_i) &= - \left(2 N^2\sigma^2 + N(v(t,x_{i-1})-v(t,x_i))^- + N(v(t,x_i)-v(t,x_{i+1}))^+ \right) \, p(t,x_i) \\
    &\qquad +\left(N^2\sigma^2 + N(v(t,x_{i-1})-v(t,x_i))^+ \right) \, p(t,x_{i-1})
    + \left( N^2\sigma^2 + N(v(t,x_i)-v(t,x_{i+1}))^- \right) \, p(t,x_{i+1}),
\end{align*}
for $i=1,..,N$. The unknowns are $v(t)=(v(t,x_1),..,v(t,x_N))$ and $p(t)=(p(t,x_1),..,p(t,x_N))$ 
with an initial condition for $p(0)$. 

As in the two state problem, the above system of equations
reduces to a $2(N-1)$-dimensional system.
Indeed, the functions 
$$
a(t,x_i)=v(t,x_i)-v(t,x_{i+1}),\qquad i=1,.., N,\, t\ge 0
$$
solve a closed system obtained from the dynamic programming equation
satisfied by $v$: for $i=1,.., N$, $t\ge 0$,
\begin{align*}
\partial_t a(t,x_i) =&\beta a(t,x_i) + \kappa (\ell(x_i,p)-\ell(x_{i+1},p))
+N^2\sigma^2 [a(t,x_{i-1})-2a(t,x_i)+a(t,x_{i+1})]\\
&+\frac{N^2}{2} [-(a(t,x_{i-1})^-)^2+(a(t,x_{i+1})^+)^2 -\mathrm{sign}(a(t,x_{i})) (a(t,x_{i}))^2].
\end{align*}
Additionally, by their definitions, 
$\sum_i a(t,x_i)=0$ as well as $\sum_i p(t,x_i)=1$ hold for all $t\geq0$. 
Hence, the dimension of the system of equations
satisfied by $(a,p)$ is $2(N-1)$.
Moreover, the optimal feedback control is given by
$$
\alpha_r(t,x_i)= a(t,x_i)^+, \qquad 
\alpha_\ell(t,x_i)= a(t,x_{i-1})^-, \qquad
i=1,..,N. 
$$

One directly verifies that for all $\kappa>0$,
$a\equiv 0$, $p\equiv 1/N$ is a solution 
corresponding the uniform distribution,
and $v\equiv \kappa/\beta$.
Formal calculations indicate that
when $\kappa$ is zero or small,
there are no stationary solutions near the
uniform.  This indicates that there could
be phase transition as in the two and continuous state
problems.  However,
the analysis of this system  requires different methods as the ones presented in this paper which heavily draw from the theory of planar ordinary differential equations.

In conclusion,
 this work studies a two-state mean-field game as a tractable model for synchronization obtained as the  discretization of the Kuramoto mean-field game introduced and studied in \cite{yin_synchronization_2010,carmona_synchronization_2023,cesaroni_stationary_2023}. After characterizing Nash equilibria (NE) as the bounded solutions of a planar dynamical system (\ref{eq:Kol},\ref{eq:HJ_control}), we provide a complete analysis of all cases.  In particular, the game exhibits phase transitions as the classical Kuramoto model \cite{kuramoto_self-entrainment_1975}.  While the uniform distribution, representing the incoherent state, is the unique stationary Nash equilibrium (SNE) for small values of the coupling constant, coherent SNE emerge as this interactions gets stronger. Moreover, in the supercritical regime, there are many time-inhomogeneous NE starting from the same initial distribution. Surprisingly, there are also periodic NE with the ergodic cost.
 
\appendix
{\section{Ergodic Hamilton-Jacobi Equations}
\label{app:erg}

This appendix discusses well-posedness of equation \eqref{eq:HJ_erg} 
using a vanishing discount argument. 

\begin{Lem}[Existence] \label{lem:vanish-disc}
Let $p(t)\in[0,1]$ be a continuous, periodic flow of probabilities. Then, there exists a classical solution $(\overline{\lambda},v)$ to \eqref{eq:HJ_erg} such that $t\mapsto v(t,x)$ is periodic with same period as $p(t)$.
\end{Lem}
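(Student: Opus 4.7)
The approach is a vanishing-discount argument adapted to the time-dependent, periodic setting. Let $\tau$ denote the period of $p(\cdot)$. For each $\beta>0$ I first exhibit a $\tau$-periodic classical solution to the discounted equation \eqref{eq:HJ_beta}, establish $\beta$-independent bounds on it, and then pass to the limit $\beta\downarrow 0$.

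\textbf{Periodic discounted solutions and uniform bounds.} For every $\beta>0$, the value function $v_\beta$ defined by \eqref{eq:value} is $\tau$-periodic in $t$: the change of variable $u\mapsto u-t$ in the cost integral combined with $p(t+\tau+s)=p(t+s)$ yields $v_\beta(t+\tau,x)=v_\beta(t,x)$. Since the Hamiltonian $H(x,\cdot,p)$ is $C^1$, classical dynamic programming arguments show $v_\beta(\cdot,x)\in C^1$ and $v_\beta$ solves \eqref{eq:HJ_beta}, so $a_\beta(t):=v_\beta(t,0)-v_\beta(t,1)$ is a $\tau$-periodic $C^1$ solution of \eqref{eq:HJ_control}. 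Using the feedback $\alpha\equiv 0$ and $\ell\le 1$ gives $0\le \beta v_\beta\le \kappa$ uniformly in $\beta$. Crucially, $a_\beta$ is bounded uniformly in $\beta$: by periodicity and smoothness, $a_\beta$ attains its extrema at points where $\dot a_\beta=0$; at any positive maximum $t_1$, \eqref{eq:HJ_control} forces $(\beta+2\sigma^2)a_\beta(t_1)+\tfrac12 a_\beta(t_1)^2=\kappa(2p(t_1)-1)\le\kappa$, hence $a_\beta(t_1)\le\sqrt{2\kappa}$, and the symmetric argument at a negative minimum yields $|a_\beta(t)|\le\sqrt{2\kappa}$ for all $t$, uniformly in $\beta\ge 0$.

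\textbf{Normalization and limit passage.} Set $\tilde v_\beta(t,x):=v_\beta(t,x)-v_\beta(0,0)$ and $\lambda_\beta:=\beta v_\beta(0,0)\in[0,\kappa]$. Since $H$ depends on $v$ only through the difference $v(t,x+1)-v(t,x)$, the shifted function satisfies
\begin{equation*}
-\partial_t \tilde v_\beta+\beta \tilde v_\beta+\lambda_\beta=H(x,\tilde v_\beta(t,x+1)-\tilde v_\beta(t,x),p(t)).
\end{equation*}
With $|a_\beta|\le\sqrt{2\kappa}$ and the explicit form of $H$, the right-hand side is uniformly bounded, hence $|\partial_t \tilde v_\beta|\le C$ uniformly in $\beta$. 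Combined with $\tilde v_\beta(0,0)=0$ and $|\tilde v_\beta(t,1)-\tilde v_\beta(t,0)|=|a_\beta(t)|\le\sqrt{2\kappa}$, this gives a uniform Lipschitz bound on $\tilde v_\beta$ over $[0,\tau]$. Arzelà--Ascoli then extracts a subsequence $\beta_n\downarrow 0$ along which $\tilde v_{\beta_n}\to v$ uniformly, $\lambda_{\beta_n}\to\overline\lambda$, and $\beta_n \tilde v_{\beta_n}\to 0$. Passing to the limit in the equation produces $-\partial_t v+\overline\lambda=H(x,v(t,x+1)-v(t,x),p(t))$, and $\tau$-periodicity of $v$ is inherited from the $\tilde v_{\beta_n}$. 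The $C^1$ regularity of $v(\cdot,x)$ follows by reading $\partial_t v=\overline\lambda-H(x,v(t,x+1)-v(t,x),p(t))$ off the equation, whose right-hand side is continuous in $t$.

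\textbf{Main obstacle.} The delicate step is the $\beta$-uniform $L^\infty$-bound on $a_\beta$. The naive estimate $|a_\beta|\le 2\|v_\beta\|_\infty\le 2\kappa/\beta$ blows up as $\beta\downarrow 0$; the improvement rests on the coercive quadratic term $\tfrac12\mathrm{sign}(a)a^2$ in \eqref{eq:HJ_control} together with the self-consistency of the ODE at the extrema of the periodic trajectory $a_\beta$.
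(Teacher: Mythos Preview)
Your proposal is correct and follows essentially the same vanishing-discount route as the paper: periodicity of $v_\beta$, the bound $0\le\beta v_\beta\le\kappa$, the extremum argument on the periodic $a_\beta$ to get a $\beta$-free $L^\infty$ bound, the normalization $v_\beta-v_\beta(0,0)$, and Arzel\`a--Ascoli. The only point worth tightening is the limit passage: rather than ``reading $\partial_t v$ off the equation,'' argue that $\partial_t\tilde v_{\beta_n}=\beta_n\tilde v_{\beta_n}+\lambda_{\beta_n}-H(x,a_{\beta_n},p)$ converges uniformly (each term does), and then the standard $C^1$ limit theorem gives $v\in C^1$ with the desired derivative---this is what the paper does via a weak-derivative argument.
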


\begin{proof}
Let $v_\beta=v_\beta(t,x)$ be the value function of infinite horizon problem
with discount factor $\beta>0$. 
By uniqueness of the dynamic programming equation
\eqref{eq:HJ_beta}, $t \mapsto v_\beta(t,x)$ is periodic with the same period as $p(t)$. 
Let $\tau$ denote the common period. 
We continue in several steps.
\vspace{4pt}

\emph{Step 1.} It is well-known that $v_\beta$ is the unique classical solution the 
dynamic programming equation \eqref{eq:HJ_beta}.
 Let $a_\beta(t):=v_\beta(t,0)-v_\beta(t,1)$ be the optimal feedback control 
 solving \eqref{eq:HJ_control}. 
 We further observe that $a_\beta$ is $\tau$-periodic. 
\vspace{4pt}

\emph{Step 2.} Set $w_\beta(t,x):=v_\beta(t,x)-v_\beta(0,0)$
and $\|\cdot\|_\infty$ be the sup-norm.  
First, by the definition of $v_\beta$,
\begin{equation}\label{eq:estimate_1}
0\leq \beta v_\beta(t,x) = \inf_{\alpha\in\mathcal{A}} \E \int_t^{\infty} \beta e^{\beta (t-u)} \left(\frac{1}{2}\alpha(u,X^\alpha_u)^2 + \kappa \ell(X^{\alpha}_u,p(u))\right) \d u \leq \int_t^\infty \beta\kappa e^{\beta (t-u)} \,\d u = \kappa.
\end{equation}
\vspace{4pt}

\emph{Step 3.}
Since $a_\beta$ is continuously differentiable and $\tau$-periodic, 
the minimal and maximal values of $a_\beta$ are achieved 
at some point $t_*$.
Let $a_*=a(t_*)$ be such a point.  Then, by \eqref{eq:HJ_control},
$$
 (\beta + 2\sigma^2) a_* + \frac{1}{2}\mathrm{sign}(a_*)a_*^2 - \kappa (2p(t)-1) =0.
 $$
 Suppose $a_*>0$.  Then, 
$a_*^2 +  2(\beta + 2\sigma^2) a_* - 2\kappa (2p(t_*)-1) =0$.
If $2p(t_*) <1$ both roots are negative.  If $2p(t_*)>1$, then
$$
0\le  a_*= - (\beta + 2\sigma^2) + \sqrt{ (\beta + 2\sigma^2)^2+ 2\kappa (2p(t_*)-1)} 
\le  2 \sqrt{ \kappa}. 
$$
Similarly, for  $a_*<0$ the equation is
$a_*^2 -  2(\beta + 2\sigma^2) a_* + 2\kappa (2p(t_*)-1) =0$.
 If $2p(t_*) >1$ both roots are positive, and if $2p(t_*)<1$, 
$$
0\ge  a_*= (\beta + 2\sigma^2) - \sqrt{ (\beta + 2\sigma^2)^2- 2\kappa (2p(t_*)-1)} 
\ge - 2 \sqrt{ \kappa}. 
$$
Therefore, $\|a_\beta\|_\infty \le 2 \sqrt{ \kappa}$. 
In view \eqref{eq:HJ_control},
$|\partial_t a_\beta| \le (\beta +2\sigma^2) \|a_\beta\|_\infty +  \|a_\beta\|_\infty^2/2 + \kappa$.
Hence, 
$$
\sup_{\beta\in(0,1)} [\|a_\beta\|_\infty  
+ \|\partial_t a_\beta\|_\infty]
< \infty.
$$
In particular, $a_\beta$ is a bounded, equicontinuous family
of functions.
\vspace{4pt}

\emph{Step 4.}
We first observe that by \eqref{eq:HJ_beta},
$$
\partial_t v_\beta(t,x)= \beta v_\beta(t,x) -H(x, v_\beta(t,x+1)-v_\beta(t,x),p(t)),
$$
Then, the estimates obtained in the previous step
 and \eqref{eq:estimate_1} imply that
$$
\sup_{\beta\in(0,1)} \|\partial_t v_\beta\|_\infty  < \infty.
$$
We use these and the  $\tau$-periodicity of $v_\beta$, to arrive at
$$
\sup_{\beta\in(0,1)} \ \|w_\beta\|_\infty\leq 
\sup_{\beta\in(0,1)} [\|a_\beta\|_\infty + \tau \,  \|\partial_t v_\beta\|_\infty ] < \infty.
$$ 
Moreover, $\partial_t w_\beta=\partial_t v _\beta$. Hence,
$$
\sup_{\beta\in(0,1)} [\|w_\beta\|_\infty  
+ \|\partial_t w_\beta\|_\infty]
< \infty,
$$
and consequently, $w_\beta$ is also a bounded, equicontinuous family
of functions.
\vspace{4pt}

\emph{Step 5.}
 Since $w_\beta$ is uniformly bounded,
 $\beta w_\beta(t,x)$ converges to $0$ as $\beta\downarrow0$,
 and therefore, any limit point of $\beta v_\beta(t,x)$ is independent of $(t,x)$. 
 Additionally, in view of the equicontiniuity of $a_\beta,w_\beta$ and the Arzel\`a–Ascoli theorem
 there is a sequence $\beta_k\downarrow0$ and continuous functions $v(t,x)$ and $a(t)$ such that 
$$
\lim_{k\to\infty} \ [\, \|w_{\beta_k}(\cdot,x)-v(\cdot,x)\|_\infty + \|a_{\beta_k}-a\|_\infty \, ] = 0,
$$
and such that
$\beta_k v_{\beta_k}(t,x)$ converges to a constant $\overline{\lambda}\in\R$. 
\vspace{4pt}

\emph{Step 6.} We claim that $v(\cdot,x)$ is continuously differentiable with derivative 
$$
\partial_t v(t,x) = \overline{\lambda}-H(x,v(t,x+1)-v(t,x),p(t)) =: F(t,x).
$$
Indeed by \eqref{eq:HJ_beta}, $w_{\beta_k}$ solves
$$
\partial_t w_{\beta_k}(t,x)  =  \beta v_{\beta_k}(t,x) -H(x,w_{\beta_k}(t,x+1)-w_{\beta_k}(t,x),p(t)) =: F_k(t,x). 
$$
Then, for any compactly supported $\phi\in C^\infty_c((0,\infty))$,
$$
    \int_0^\infty \partial_t w_{\beta_k} (t,x) \phi(t) \,\d t = - \int_0^\infty w_{\beta_k} (t,x) \phi'(t) \,\d t \ \longrightarrow \ - \int_0^\infty v (t,x) \phi'(t) \,\d t, \quad \text{as} \ k\to\infty,
$$
by the dominated convergence theorem. At the same time, 
$$
\int_0^\infty \partial_t w_{\beta_k} (t,x) \phi(t) \,\d t = \int_0^\infty F_k(t,x) \phi(t) \,\d t \ \longrightarrow \ \int_0^\infty F(t,x) \phi(t) \,\d t, \quad \text{as} \ k\to\infty,
$$
by the dominated convergence theorem. Hence, $F(\cdot,x)$ is the weak (in the sense of distributions) derivative of $v(\cdot,x)$ on $(0,\infty)$. But $F(\cdot,x)$ is continuous, hence the strong derivative of $\partial_t v(t,x)$ exists and is equal to $F(t,x)$ on $(0,\infty)$. This extends to $[0,\infty)$ and hence implies that $(\overline{\lambda},v)$ is a classical solution to \eqref{eq:HJ_erg}.
\end{proof}

We continue to discuss the uniqueness of a solution $(\overline{\lambda},v)$ to \eqref{eq:HJ_erg}.

\begin{Lem}[Uniqueness]
    Let $p(t)$ be a continuous, periodic flow of probabilities
    and $(\lambda_u,u)$, $(\lambda_w,w)$ be two
    classical solutions of  \eqref{eq:HJ_erg}
    that are periodic with same period.
    Then,  $\lambda_v=\lambda_w$ and
    $$
    (w-u)(t,x)= (w-u)(0,0), \qquad \forall \ x \in \cX, t \ge0.
    $$
\end{Lem}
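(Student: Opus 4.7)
The plan is to first show that both ergodic constants equal the same optimal value, then to run a maximum-principle argument that pins down the optimal feedback at a single time, and finally to use ODE uniqueness together with the HJ equation to conclude that $w - u$ is a constant.

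The equality $\lambda_u = \lambda_w$ follows from the verification argument already used in the proof of $(\ref{ii}) \Rightarrow (\ref{i})$ of Proposition \ref{pr:ergodic}: that argument relies only on the HJ equation \eqref{eq:HJ_erg}, Dynkin's formula, and boundedness of $v$, so it does not require $p(\cdot)$ to be a Nash equilibrium. Applied to either bounded classical solution, it shows that the associated constant equals
$$\inf_{\alpha\in\cA}\ \underset{T\uparrow\infty}{\overline{\lim}}\ \frac{1}{T}\,\E\int_0^T \left(\frac{1}{2}\alpha(t,X^\alpha_t)^2 + \kappa\,\ell(X^\alpha_t,p(t))\right) \d t,$$
a quantity that depends only on the fixed $p(\cdot)$. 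Hence $\lambda_u = \lambda_w =: \lambda$.

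Next, set $d(t,x) := w(t,x) - u(t,x)$, $a_v(t) := v(t,0) - v(t,1)$ for $v \in \{u,w\}$, and $\Delta_v(t,x) := v(t,x+1) - v(t,x)$. Since $\cX = \{0,1\}$ is finite and $d$ is continuous and periodic in $t$, it attains its global maximum $M$ at some $(t^*,x^*)$; by periodicity $t^*$ may be chosen interior, so $\partial_t d(t^*,x^*) = 0$, while maximality in $x$ yields $\Delta_w(t^*,x^*) \le \Delta_u(t^*,x^*)$. Subtracting the two HJ equations at $(t^*,x^*)$ and using $\lambda_u = \lambda_w$ gives $H(x^*, \Delta_w(t^*,x^*), p(t^*)) = H(x^*, \Delta_u(t^*,x^*), p(t^*))$. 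Since the map $a \mapsto \sigma^2 a - \frac{1}{2}(a^-)^2 + \kappa\,\ell(x,p)$ is strictly increasing (its derivative equals $\sigma^2 > 0$ on $\{a > 0\}$ and $\sigma^2 - a > 0$ on $\{a < 0\}$), we conclude $\Delta_w(t^*,x^*) = \Delta_u(t^*,x^*)$, whence $a_u(t^*) = a_w(t^*)$.

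A direct computation, obtained by subtracting the HJ equation for $x=1$ from that for $x=0$ and using the explicit form of $H$, shows that both $a_u$ and $a_w$ satisfy the scalar ODE \eqref{eq:HJ_control} with $\beta = 0$ driven by the fixed $p(\cdot)$. Its right-hand side $2\sigma^2 a + \frac{1}{2}a|a| - \kappa(2p(t)-1)$ is locally Lipschitz in $a$, so the coincidence at $t^*$ forces $a_u \equiv a_w$, and in particular $d(t,0) = d(t,1)$ for every $t$. Substituting back into the HJ equation then gives $\partial_t u(t,x) = \lambda - H(x,\Delta_u(t,x),p(t)) = \lambda - H(x,\Delta_w(t,x),p(t)) = \partial_t w(t,x)$, so $d(\cdot,x)$ is constant in $t$. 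Combining the two constancies yields $d(t,x) = d(0,0)$ for all $t \ge 0$ and $x \in \cX$. The main obstacle is the maximum-principle step, which hinges on the strict monotonicity of $H$ in its gradient slot and on using periodicity to place $t^*$ at an interior time so that $\partial_t d(t^*,x^*)$ genuinely vanishes.
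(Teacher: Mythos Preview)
Your proof is correct and follows essentially the same strategy as the paper: verification for $\lambda_u=\lambda_w$, a maximum-principle argument at the maximizer of $w-u$ using strict monotonicity of $a\mapsto H(x,a,p)$, and then uniqueness of an ODE to propagate the equality to all times. The only minor difference is that the paper propagates via uniqueness of the full backward HJ system for $(v(\cdot,0),v(\cdot,1))$, whereas you first pass to the scalar ODE \eqref{eq:HJ_control} for $a=v(\cdot,0)-v(\cdot,1)$ and invoke Picard--Lindel\"of there before reading off $\partial_t u=\partial_t w$; both routes are equivalent here.
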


\begin{proof}
Following the verification argument presented in $(\ref{ii}) \Rightarrow (\ref{i})$
 of Proposition \ref{pr:ergodic}, we conclude that both $\lambda_u$ and 
 $\lambda_w$ are equal to the ergodic cost $\overline{\lambda}$
 constructed in the existence proof.
 \vspace{4pt}
 
 \emph{Step 1}.  Suppose that at some point $t_*$,
 $$
 (w-u)(t_*,0)=(w-u)(t_*,1)=:\gamma_*.
 $$
 We periodically extend both $w$ and $u$ to all $t \in \R$, and
 for $x\in \cX,\, t \le t_*$ set,
 $$
 \tilde u(t,x):= u(t,x)+\gamma_* +\overline{\lambda}(t_*-t),\qquad
  \tilde w(t,x):= w(t,x)+\overline{\lambda}(t_*-t).
 $$ 
 Then both $\tilde u$ and $\tilde w$ solve the same
 finite horizon dynamic programming equation,
 $$
 -\partial_t v(t,x) =H(x,v(t,x+1)-v(t,x), p(t)), \qquad x\in \cX,\, t < t_*.
 $$
 Moreover, $ \tilde u(t_*,x)= \tilde w(t_*,x)$ for $x \in \cX$.
 Then, by the uniqueness of  solutions
 of the above equation, we conclude that
 for every $x\in \cX,\, t \le t_*$ we have
 $$
 \tilde u(t,x)= \tilde w(t,x),
 \quad \Leftrightarrow \quad
 u(t,x)+\gamma_*=w(t,x).
 $$
 
 \emph{Step 2}. Let $\tau$ be the common
 period of $u$ and $w$, and we extend them to all of $\R$.  Then, there are 
 $t_* \in [0,\tau]$,  $x_*\in \cX$ such that
 $$
 \gamma_*:=  (w-u)(t_*,x_*)
 = \max\{ (w-u)(t,x)\, :\, (t,x) \in \R \times \cX\, \}.
 $$
 Clearly $\partial_t (w-u)(t_*,x_*)=0$ and by \eqref{eq:HJ_erg},
 $$
 0=-\partial_t (w-u)(t_*,x_*) =
 H(x_*,w(t_*,x_*+1)-w(t_*,x_*),p(t_*)) -H(x_*,u(t_*,x_*+1)-u(t_*,x_*),p(t_*)).
 $$
Invertibility of $a \mapsto H(x_*,a,p(t_*))$ implies that 
$w(t_*,x_*+1)-w(t_*,x_*)=u(t_*,x_*+1)-u(t_*,x_*)$. Therefore,
$$
(w-u)(t_*,x_*+1)=(w-u)(t_*,x_*)=\gamma_*.
$$
We now use Step 1 to conclude.
\end{proof}

\section{Proof of Lemma \ref{lem:equal}}
\label{app:equal}

We fix $\alpha \in \cA$.
Set 
$$
p(t):= \P(X^\alpha_t=1),\qquad
R(t):= \frac{\alpha(t,0)}{\alpha(t,1)}- \frac{p(t)}{1-p(t)},
\qquad t \ge 0.
$$
We next define $\tilde \alpha$ by,
\begin{align*}
(\tilde \alpha(t,0)\, , \, \tilde \alpha(t,1))&:=
(\alpha(t,0)-\alpha(t,1)\frac{p(t)}{1-p(t)}\, ,\, 0), \qquad\text{if}\ R(t)>0,\\
(\tilde \alpha(t,0)\, , \, \tilde \alpha(t,1))&:=
(0\, ,\, \alpha(t,1)-\alpha(t,0)\frac{1-p(t)}{p(t)}), \qquad\text{if}\ R(t)\le0.
\end{align*}
We directly verify that $\tilde \alpha(t,0),\tilde \alpha(t,1) \ge 0$ and
$\tilde \alpha \in \cA_s$.  Moreover, by the definitions,
$$
\Dot{p}(t)= (\sigma^2+\alpha(t,0)) (1-p(t)) -
(\sigma^2+\alpha(t,1)) p(t)
=  (\sigma^2+\tilde \alpha(t,0)) (1-p(t)) -
(\sigma^2+\tilde \alpha(t,1)) p(t).
$$
Therefore, $p(t)=\P(X^\alpha_t=1)=\P(X^{\tilde \alpha}_t=1)$.
Also, by the construction it is clear that
$|\tilde \alpha(t,x)|\le |\alpha(t,x)|$ for all $x \in \cX$ and $t \ge 0$.

Suppose that there is $\alpha_p \in \cA_s$ so that
the pair $(p, \alpha_p)$ satisfies \eqref{eq:KOL}.
Then, by \eqref{eq:KOL},
$$
(\Dot{p}(t)+\sigma^2(2p(t)-1)) = \alpha_p(t,0)(1-p(t)) - \alpha_p(t,1) p(t).
$$
Since $\alpha_p(t,0) \, \alpha_p(t,1)=0$, this implies that
$$
\alpha_p(t,0):= 
        \frac{(\Dot{p}(t) +\sigma^2(2p(t)-1))^+}{1-p(t)},
\qquad
\alpha_p(t,1):= \frac{(\Dot{p}(t) +\sigma^2(2p(t)-1))^-}{p(t)}.
$$
Hence $\alpha_p$ is uniquely determined by $p(t)$.
\hfill $\Box$}

\printbibliography

@article{BC,
  title={Mean {F}ield {G}ames {M}aster {E}quations: {F}rom {D}iscrete to {C}ontinuous {S}tate {S}pace},
  author={Bertucci, Charles and Cecchin, Alekos},
  journal={arXiv preprint arXiv:2207.03191},
  year={2022}
}

@article{FS,
  title={Potential {M}ean-{F}ield {G}ames and {G}radient {F}lows},
  author={H\"{o}fer, Felix and Soner, H Mete},
  journal={arXiv:2408.00733},
  year={2024}
}

@article{castellano_statistical_2009,
	title = {Statistical {P}hysics of {S}ocial {D}ynamics},
	volume = {81},
	number = {2},
	journal = {Reviews of Modern Physics},
	author = {Castellano, Claudio and Fortunato, Santo and Loreto, Vittorio},
	year = {2009},
	pages = {591--646}
}

@article{bauso_opinion_2016,
	title = {Opinion {Dynamics} in {Social} {Networks} through {Mean}-{Field} {Games}},
	volume = {54},
	number = {6},
	journal = {SIAM Journal on Control and Optimization},
	author = {Bauso, D. and Tembine, H. and Ba\c{s}ar, T.},
	year = {2016},
	pages = {3225--3257}
}

@article{acemoglu_opinion_2013,
	title = {Opinion {Fluctuations} and {Disagreement} in {Social} {Networks}},
	volume = {38},
	number = {1},
	journal = {Mathematics of Operations Research},
	author = {Acemo\u{g}lu, Daron and Como, Giacomo and Fagnani, Fabio and \"{O}zda\u{g}lar, Asuman},
	year = {2013},
	pages = {1--27}
}

@book{hale_ordinary_2009,
	title = {Ordinary {Differential} {Equations}},
	publisher = {Dover Publications},
	author = {Hale, Jack K.},
	year = {2009}
}

@book{perko_differential_1996,
	title = {Differential {Equations} and {Dynamical} {Systems}},
	volume = {7},
	publisher = {Springer US},
	author = {Perko, Lawrence},
	year = {1996}
}

@inproceedings{huang_nash_2007,
	title = {The {Nash} {C}ertainty {E}quivalence {P}rinciple and {McKean}-{Vlasov} {S}ystems: {An} {I}nvariance {P}rinciple and {E}ntry {A}daptation},
	booktitle = {2007 46th {IEEE} {Conference} on {Decision} and {Control}},
	author = {Huang, Minyi and Caines, Peter E. and Malham\'{e}, Roland P.},
	year = {2007},
	pages = {121--126}
}

@article{huang_large-population_2007,
	title = {Large-{Population} {Cost}-{Coupled} {LQG} {Problems} {With} {Nonuniform} {Agents}: {Individual}-{Mass} {Behavior} and {Decentralized} $\varepsilon$-{Nash} {Equilibria}},
	volume = {52},
	issn = {1558-2523},
	journal = {IEEE Transactions on Automatic Control},
	author = {Huang, Minyi and Caines, Peter E. and Malham\'{e}, Roland P.},
	year = {2007},
	pages = {1560--1571}
}

@article{huang_invariance_2007,
	title = {An {Invariance} {Principle} in {Large} {Population} {Stochastic} {Dynamic} {Games}},
	volume = {20},
	number = {2},
	journal = {Journal of Systems Science and Complexity},
	author = {Huang, Minyi and Caines, Peter E. and Malham\'{e}, Roland P.},
	year={2007},
	pages = {162--172}
}

@inproceedings{huang_individual_2003,
	title = {Individual and {M}ass {B}ehaviour in {L}arge {P}opulation {S}tochastic {W}ireless {P}ower {C}ontrol {P}roblems: {C}entralized and {Nash} {E}quilibrium {S}olutions},
	volume = {1},
	booktitle = {42nd {IEEE} {International} {Conference} on {Decision} and {Control}},
	author = {Huang, Minyi and PE, Caines and Malham\'{e}, R.P.},
	year = {2003},
	pages = {98--103}
}

@article{lasry_mean_2007,
	title = {Mean {F}ield {G}ames},
	volume = {2},
	number = {1},
	journal = {Japanese Journal of Mathematics},
	author = {Lasry, Jean-Michel and Lions, Pierre-Louis},
	year = {2007},	
	pages = {229--260}
}

@article{lasry_jeux_2006,
	title = {Jeux \`{a} champ moyen. {II} – {Horizon} fini et contr\^{o}le optimal},
	volume = {343},
	number = {10},
	journal = {Comptes Rendus Mathematique},
	author = {Lasry, Jean-Michel and Lions, Pierre-Louis},
	year = {2006},
	pages = {679--684}
}

@article{lasry_jeux_2006-1,
	title = {Jeux \`{a} champ moyen. {I} – {Le} cas stationnaire},
	volume = {343},
	number = {9},
	journal = {Comptes Rendus Mathematique},
	author = {Lasry, Jean-Michel and Lions, Pierre-Louis},
	year = {2006},
	pages = {619--625}
}

@article{cecchin_convergence_2019,
	title = {On the {Convergence} {Problem} in {Mean} {Field} {Games}: {A} {Two} {State} {Model} without {Uniqueness}},
	volume = {57},
	number = {4},
	journal = {SIAM Journal on Control and Optimization},
	author = {Cecchin, Alekos and Pra, Paolo Dai and Fischer, Markus and Pelino, Guglielmo},
	year = {2019},
	pages = {2443--2466}
}

@article{gueant_existence_2015,
	title = {Existence and {Uniqueness} {Result} for {Mean} {Field} {Games} with {Congestion} {Effect} on {Graphs}},
	volume = {72},
	number = {2},
	journal = {Applied Mathematics \& Optimization},
	author = {Gu\'{e}ant, Olivier},
	year = {2015},
	pages = {291--303}
}

@article{cecchin_probabilistic_2020,
	title = {Probabilistic {Approach} to {Finite} {State} {Mean} {Field} {Games}},
	volume = {81},
	issn = {1432-0606},
	number = {2},
	journal = {Applied Mathematics \& Optimization},
	author = {Cecchin, Alekos and Fischer, Markus},
	year = {2020},
	pages = {253--300}
}

@article{gomes_socio-economic_2014,
	title = {Socio-{E}conomic {A}pplications of {F}inite {S}tate {M}ean {F}ield {G}ames},
	volume = {372},
	number = {2028},
	journal = {Philosophical Transactions of the Royal Society A: Mathematical, Physical and Engineering Sciences},
	author = {Gomes, Diogo and Velho, Roberto M. and Wolfram, Marie-Therese},
	year = {2014},
	pages = {20130405}
}

@article{dai_pra_climb_2019,
	title = {Climb on the {Bandwagon}: {Consensus} and {Periodicity} in a {Lifetime} {Utility} {Model} with {Strategic} {Interactions}},
	volume = {9},
	number = {4},
	journal = {Dynamic Games and Applications},
	author = {Dai Pra, Paolo and Sartori, Elena and Tolotti, Marco},
	year = {2019},
	pages = {1061--1075}
}

@book{carmona_probabilistic_2018-1,
	title = {Probabilistic {Theory} of {Mean} {Field} {Games} with {Applications} {I}},
	volume = {83},
	publisher = {Springer International Publishing},
	author = {Carmona, Ren\'{e} and Delarue, Fran\c{c}ois},
	year = {2018}
}

@article{kolokoltsov_mean-field-game_2017,
	title = {Mean-{Field} {Game} {Model} of {Corruption}},
	volume = {7},
	number = {1},
	journal = {Dynamic Games and Applications},
	author = {Kolokoltsov, V. N. and Malafeyev, O. A.},
	year = {2017},
	pages = {34--47}
}

@article{kolokoltsov_corruption_2018,
	title = {Corruption and {B}otnet {D}efense: {A} {M}ean {F}ield {G}ame {A}pproach},
	volume = {47},
	number = {3},
	journal = {International Journal of Game Theory},
	author = {Kolokoltsov, V. N. and Malafeyev, O. A.},
	year = {2018},
	pages = {977--999}
}

@article{kolokoltsov_mean-field-game_2016,
	title = {Mean-{Field}-{Game} {Model} for {Botnet} {Defense} in {Cyber}-{Security}},
	volume = {74},
	number = {3},
	journal = {Applied Mathematics \& Optimization},
	author = {Kolokoltsov, V. N. and Bensoussan, A.},
	year = {2016},
	pages = {669--692}
}

@article{cohen_analysis_2023,
	title = {Analysis of the {Finite}-{State} {Ergodic} {Master} {Equation}},
	volume = {87},
	number = {3},
	journal = {Applied Mathematics \& Optimization},
	author = {Cohen, Asaf and Zell, Ethan},
	year = {2023},
	pages = {40},
}

@article{carmona_probabilistic_2021,
	title = {A {Probabilistic} {Approach} to {Extended} {Finite} {State} {Mean} {Field} {Games}},
	volume = {46},
	number = {2},
	journal = {Mathematics of Operations Research},
	author = {Carmona, Ren\'{e} and Wang, Peiqi},
	year = {2021},
	pages = {471--502}
}

@article{cesaroni_stationary_2023,
  title={Stationary {E}quilibria and their {S}tability in a {K}uramoto {MFG} with {S}trong {I}nteraction},
  author={Cesaroni, Annalisa and Cirant, Marco},
  journal={Communications in Partial Differential Equations},
  pages={1--27},
  year={2024}
}

@inproceedings{kuramoto_self-entrainment_1975,
	address = {Berlin, Heidelberg},
	series = {Lecture {Notes} in {Physics}},
	title = {Self-entrainment of a population of coupled non-linear oscillators},
	booktitle = {International {Symposium} on {Mathematical} {Problems} in {Theoretical} {Physics}},
	publisher = {Springer},
	author = {Kuramoto, Yoshiki},
	editor = {Araki, Huzihiro},
	year = {1975},
	pages = {420--422}
}

@article{carmona_jet_2020,
	title = {Jet {Lag} {Recovery}: {Synchronization} of {Circadian} {Oscillators} as a {Mean} {Field} {Game}},
	volume = {10},
	number = {1},
	journal = {Dynamic Games and Applications},
	author = {Carmona, Ren\'{e} and Graves, Christy V.},
	year = {2020},
	pages = {79--99}
}

@article{yin_synchronization_2010,
	title = {Synchronization of {C}oupled {O}scillators is a {G}ame},
	author = {Yin, Huibing and Mehta, Prashant G. and Meyn, Sean P. and Shanbhag, Uday V.},
     journal={IEEE Transactions on Automatic Control},
     volume={57},
     number={4},
     pages={920--935},
     year={2011}
}

@article{carmona_synchronization_2023,
	title = {Synchronization in a {Kuramoto} {Mean} {Field} {Game}},
	volume = {48},
	number = {9},
	journal = {Communications in Partial Differential Equations},
	author = {Carmona, Ren\'{e} and Cormier, Quentin and Soner, H Mete},
	year = {2023},
	pages = {1214--1244}
}

@article{gomes_continuous_2013,
	title = {Continuous {Time} {Finite} {State} {Mean} {Field} {Games}},
	volume = {68},
	number = {1},
	journal = {Applied Mathematics \& Optimization},
	author = {Gomes, Diogo A. and Mohr, Joana and Souza, Rafael Rig\~{a}o},
	year = {2013},
	pages = {99--143}
}

@inproceedings{YMMS,
    title={Bifurcation {A}nalysis of a {H}eterogeneous {M}ean-{F}ield {O}scillator{G}ame {M}odel},
    author={Yin, Huibing and Mehta, Prashant G and Meyn, Sean P and Shanbhag, Uday V},
    booktitle={2011 50th IEEE Conference on Decision and Control and European Control Conference},
    pages={3895--3900},
    year={2011}
  }

@article{lions,
  title={Cours au {C}oll\`{e}ge de {F}rance},
  author={Lions, Pierre-Louis},
  journal={Available at www.college-de-france.fr},
  year={2007}
}

@book{fleming_soner,
  title={Controlled {M}arkov {P}rocesses and {V}iscosity {S}olutions},
  author={Fleming, Wendell and Soner, H.\ Mete},
  volume={25},
  year={2006},
  publisher={Springer Science \& Business Media}
}

@article{cohen2024asymptotic,
  title={Asymptotic {N}ash {E}quilibria of {F}inite-{S}tate {E}rgodic {M}arkovian {M}ean {F}ield {G}ames},
  author={Cohen, Asaf and Zell, Ethan},
  journal={arXiv preprint arXiv:2404.11695},
  year={2024}
}

@article{briani2018stable,
  title={Stable {S}olutions in {P}otential {M}ean {F}ield {G}ame {S}ystems},
  author={Briani, Ariela and Cardaliaguet, Pierre},
  journal={Nonlinear Differential Equations and Applications},
  volume={25},
  pages={1--26},
  year={2018},
  publisher={Springer}
}

\end{document}